\newtheorem{theorem}{Theorem}[section]
\newtheorem{observation}[theorem]{Observation}
\newtheorem{property}[theorem]{Property}
\newtheorem{remark}[theorem]{Remark}
\newcommand{\red}{\color{red}}
\newcommand{\cp}{\,\square\,}
\newcommand{\diam}{{\rm diam}}
\newcommand{\AC}{{\rm AC}}
\newcommand{\OC}{{\rm OC}}
\newcommand{\AP}{{\rm AP}}
\newcommand{\OP}{{\rm OP}}
\newcommand{\VT}{{\rm VT}}
\def\cp{\,\square\,}
\begin{document}
	
\title{Properties of Villarceau Torus}
	
\author{
	Paul Manuel
}

\date{}

\maketitle
\vspace{-0.8 cm}
\begin{center}
	Department of Information Science, College of Life Sciences, Kuwait University, Kuwait \\
	{\tt pauldmanuel@gmail.com, p.manuel@ku.edu.kw}
		
\end{center}

\begin{abstract}
{\red This is an initial draft of ongoing research. This incomplete research is submitted to Kuwait University for research funding. The project proposal is under processing. Please do not hesitate to write your comments to improve the quality of the paper. You will reach me at the email address stated above. I will be pleased to interact with you.}
\end{abstract}

\noindent{\bf Keywords:} Villarceau torus; isometric and convex cycle; toroidal helices,  Network distance; congestion-balanced routing; 

\medskip
\noindent{\bf AMS Subj.\ Class.: 05C12, 05C70, 68Q17}

\section{Introduction}
One interesting geometrical problem related to celestial surfaces posed by 19th century mathematicians was "Given an arbitrary point P on a surface, how many distinct circles can be drawn on the surface passing through the point P?". This was called the n-circle property. 
While a sphere has the infinite circle property, it was shown that some surfaces have the n-circle property for n equal to 4, 5 or 6 \cite{Blum80}. A startling conjecture by Richard Blum \cite{Blum80} is that there are no surfaces with the n-circle property for n greater than 6 and less than infinity. 
A surface which was attracted by 19th century mathematicians was torus. Until 1848, torus was known only by two circles which are toroidal and poloidal circles. In 1848, Yvon Villarceau demonstrated two diagonal circles on a torus \cite{Roegel14, Vill48}. 
The toroidal circle of a torus is horizontal and lies in the plane of the torus. The poloidal circle is vertical and is perpendicular to the toroidal. The third and fourth circles are obliquely inclined with respect to the first two circles of the torus and are known as the Villarceau circles \cite{Roegel14}. Refer to Figure \ref{fig:Torus_4Circles}. In other words, Villarceau circles are a pair of circles produced by cutting a torus obliquely through the center at a special angle $\theta$ \cite{Roegel14}. At the given point P on the torus, while the third circle makes an acute angle with the horizontal plane, the fourth one makes an obtuse angle with horizontal plane \cite{Roegel14}. 
Following Villarceau circles, there were two types of tori.  A torus built by toroidal and poloidal circles is called grid (ring) torus and a torus built by the Villarceau circles is called Villarceau torus. Refer to Figure \ref{fig:RingTorus_VT}.  In Physics, it is called spiral torus \cite{FeCh16, GuCh02}. The $2$-dimensional representation of Villarceau torus is shown in Figure \ref{fig:BW_Two_Models_4x5}.   
\begin{figure}[ht!]
	\begin{center}
		\scalebox{1.0}{\includegraphics{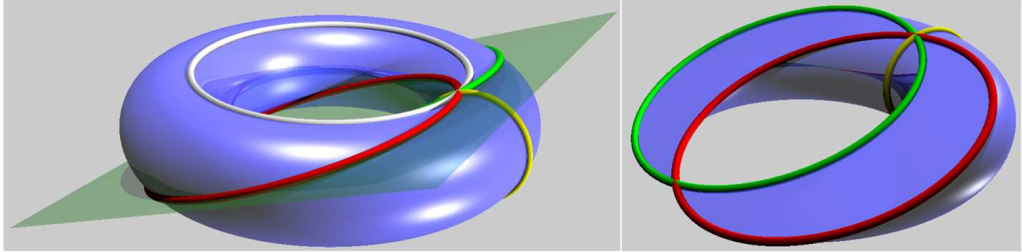}}
	\end{center}
	\caption{The toroidal circle (white color) of a torus is horizontal and lies in the plane of the torus. The poloidal circle (yellow color) is vertical and is perpendicular to the toroidal. The third and fourth circles (red and green colors) are obliquely inclined with respect to the first two circles of the torus and are known as the Villarceau circles. The diagrams are retrieved from \cite{Roegel14}.}
	\label{fig:Torus_4Circles}
\end{figure}
\begin{figure}[ht!]
	\begin{center}
		\scalebox{1.5}{\includegraphics{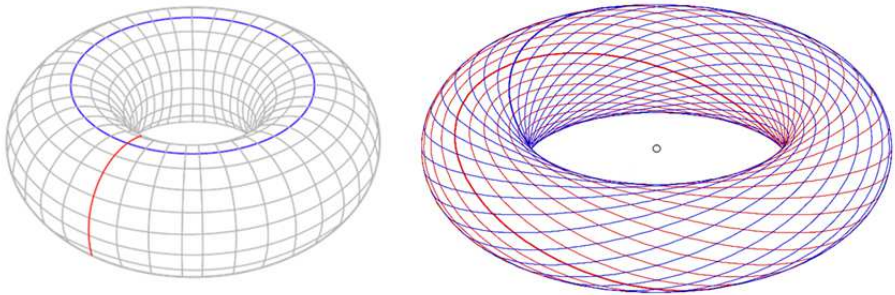}}
	\end{center}
	\caption{Ring torus and Villarceau torus. The diagrams are extracted from Wikimedia Commons.}
	\label{fig:RingTorus_VT}
\end{figure}
\begin{figure}[ht!]
	\begin{center}
		\scalebox{0.9}{\includegraphics{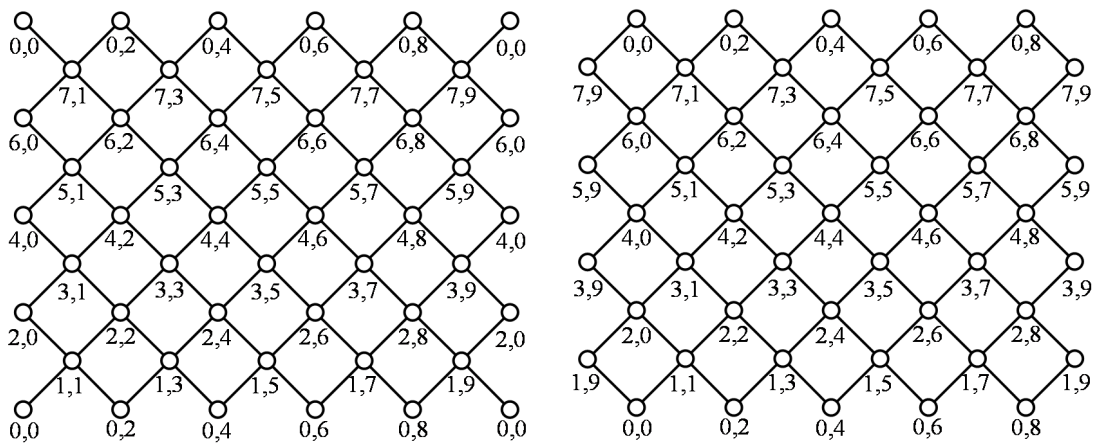}}
	\end{center}
	\caption{The 2D representation of Villarceau torus $\VT(4,5)$. One is a cyclic rotation of the other.}
	\label{fig:BW_Two_Models_4x5}
\end{figure}
\section{Terminologies and some observations of Villarceau torus $\VT(r,s)$}
\label{sec:properties-VT}
In Discrete Mathematics, $(Z_n, \oplus_n)$ represents the additive group of integers modulo $n$ where $Z_n = \{0,1 \ldots n-1\}$ and $a\oplus_n b = (a+b)\mod n$. 
The $2$-dimensional representation of Villarceau torus which are the projection of $3$-dimensional torus onto a $2$-dimensional plane was studied by several authors \cite{Baird18, Dorst19, MoMo11}. Refer to Figure \ref{fig:BW_Two_Models_4x5}.  The Villarceau torus is denoted by $\VT(rs)$ where the addition  in both coordinates is  modular addition. The addition of two vertices $(r_1, s_1)$ and $(r_2, s_2)$ in $\VT(r,s)$ is defined as follows:
$$(r_1, s_1) + (r_2, s_2) = (r_1+ r_2, s_1+ s_2) = (r_1 \oplus_{2r} r_2, s_1 \oplus_{2s} s_2)$$

The vertex set $V$ of $\VT(r,s)$ is 
$V$ = $\{(2i+1, 2j+1) : 0 \leq i \leq r-1, 0 \leq j \leq s-1\} \cup \{(2i, 2j) : 0 \leq i \leq r-1, 0 \leq j \leq s-1\}$.
The edge set $E$ of $\VT(r,s)$ is 
$E = \{(2i, 2j)(2i+1, 2j+1), (2i, 2j)(2i+1, 2j-1) : 0 \leq i \leq r-1, 0 \leq j \leq s-1\}$. See Figure \ref{fig:BW_Two_Models_4x5}.

A subgraph $H$ in $G$ is \textit{isometric}, if $d_H(u,v)=d_G(u,v)$ for every pair $u$ and $v$ of vertices of $H$. A set S of vertices in $G(V,E)$ is \textit{convex} if for each pair $x$ and $y$ of vertices in $S$, each isometric $x,y$-path entirely  lies inside $S$.
A cycle or path is \textit{isometric} (resp. \textit{convex}) if its induced subgraph is isometric (resp. convex). 
Here are some facts on convex paths and cycles which we recall to refresh our knowledge. A convex cycle (resp. path) is an isometric cycle (resp. path). Even cycles are not convex but odd cycles are convex. In an even cycle, a diametral path is not convex. However, in an odd cycle, every path is convex. Now we list some terminologies and notations which we use frequently in this paper.

While a path and a cycle are denoted by $P$ and $C$ respectively, an isometric path and an isometric cycle are denoted by $IP$ and $IC$ respectively. 
The edges in $VT(r,s)$ are either acute or obtuse \cite{Coxeter69, Dorst19, MoMo11}. See Figure \ref{fig:BW_Two_Models_4x5}. The set of acute edges of $\VT(r,s)$ is denoted by $E_a$ and the set of obtuse edges of $\VT(r,s)$ is denoted by $E_o$. In other words, the edge set $E$ of $\VT(r,s)$ is partitioned by acute edges and obtuse edges such that $E = E_a \cup E_o$, $E_a \cap E_o = \emptyset$, and $|E_a|=|E_o|$.
A path in $\VT(r,s)$ is called an \textit{acute} (resp. \textit{obtuse}) \textit{path}  if it contains only acute (resp. obtuse) edges. 
In the same way, a cycle in $\VT(r,s)$ is called an \textit{acute} (resp. \textit{obtuse}) \textit{cycle} if it contains only acute (resp. obtuse) edges.   
An acute (resp. obtuse) path  is represented by $\AP$ (resp. $\OP$). In the same way, an acute (resp. obtuse) cycle is represented by $\AC$ (resp. $\OC$).

\noindent 
Now, we define acute and obtuse paths of $\VT(r,s)$ as follows:

$\AP(0,2k)= (0,2k)(1,2k +1) \ldots (i,2k + i) \ldots (2r-1, 2k + (2r-1))(0, 2k + 2r)$, for $k = 0,1 \ldots s-1$.

$\OP(0,2k) = (0,2k)(1,2k-1) \ldots (i, 2k-i) \ldots (2r-1, 2k-(2r-1))(0, 2k-2r)$, for $k = 0,1 \ldots s-1$.

\noindent
The path $\AP(0,2k)$ consists of only acute edges and it is an acute path. Similarly, $\OP(0,2k)$ is an obtuse path. 
Now, we define acute cycle $\AC(0,2k)$ and obtuse cycle $\OC(0,2k)$ of $\VT(r,s)$. Cycle $\AC(0,2k)$ extends $\AP(0,2k)$ along the acute edges until it reaches $(0,2k)$ as follows:

$\AC(0,2k) = (0,2k)(1,2k +1) \ldots (i,2k + i) \ldots (2r-1, 2k + (2r-1))(0, 2k+2r) \ldots (2r-1,2k-1)(0,2k)$, for $k = 0,1 \ldots s-1$.

The above definition is well-defined because vertex $(0,2k)$ has only two acute edges $(0,2k)(1,2k +1)$ and $(2r-1, 2k-1)(0, 2k)$. If a cycle exits at $(0,2k)$ through $(0,2k)(1,2k+1)$, then it will reach back to $(0,2k)$ only through $(2r-1, 2k -1)(0, 2k)$. Moreover, The cycle $\AC(0,2k)$ consists of only acute edges and thus it is an acute cycle.

In the same way, $\OC(0,2k)$ extends the obtuse path $\OP(0,2k)$ along the obtuse edges until it reaches $(0,2k)$:

$\OC(0,2k) = (0,2k)(1,2k-1) \ldots (i, 2k-i) \ldots (2r-1, 2k-(2r-1))(0, 2k-2r) \ldots (2r-1,2k+1)(0,2k)$, for $k = 0,1 \ldots s-1$.

A graph $G$ is said to be \textit{uniform geodesic} if all the maximal isometric paths of $G$ are of uniform length $\diam(G)$. The striking difference between ring torus $C_r\cp C_s$ and Villarceau torus $\VT(r,s)$ are the results given below:

\begin{property}
	\label{pro:VT_diameter} Let  $\VT(r,s)$ denote a Villarceau torus.
		\begin{enumerate}
		\item	While the diameter of $C_r\cp C_s$ is $r+s$, the diameter of $\VT(r,s)$ is $\max\{r,s\}$.
		\item  While ring torus is uniform geodesic, Villarceau torus is not uniform geodesic. Villarceau torus $\VT(r,s)$ has maximal isometric paths of length $r$ as well as  maximal isometric paths of length $s$.
	\end{enumerate}
\end{property}

We shall use the following facts at a later stage:
\begin{property}
	\label{pro:VT_trivial_properties}
	Let  $\VT(r,s)$ denote a Villarceau torus where $s$ is assumed to be larger than $r$.
	\begin{enumerate}
		\item The vertex set $V$ and the acute edge set $E_a$ of $VT(r,s)$ are partitioned by paths $\{\AP(0,2k) : k = 0,\ldots, s-1\}$. In the same way, the vertex set $V$ and the obtuse edge set $E_o$ of $\VT(r,s)$ is partitioned by paths $\{\OP(0,2k) : k = 0,\ldots, s-1\}$. 
		\item The cycles $\AC(x,y)$ $($resp. $\OC(x,y))$ are unique because they are restricted to running through only acute (resp. obtuse) edges. In other words, given a vertex $(x,y)$ in $\VT(r, s)$, there exists only one acute cycle $\AC(x,y)$ and only one obtuse cycle $\OC(x,y)$ passing through $(x,y)$. 
	\end{enumerate}
\end{property}
\section{Toroidal helices in $\VT(r,s)$}
\label{sec:cycles-VT}
A toroidal helix is a coil wrapped around a torus. It revolves around the poloidal direction as well as toroidal direction and is characterized by the number of revolutions it winds around the torus \cite{YoFi15, ZhNa20}. Moreover, the wrapping angle of a toroidal helix against all the toroidal and poloidal cycles of the torus is a constant angle $\theta$ \cite{OlBo12, YoFi15}. In this section, we study the mathematical properties of the toroidal helical motion in the magnetic field by means of Villarceau torus. Refer to Figure \ref{fig:Pysics_Toroidal_Helix}. behind Since a toroidal helix flows around poloidal and toroidal direction, there are two types of revolutions \cite{YoFi15}: 
\begin{enumerate}
	\item \textit{Poloidal Revolution} which spirals around the poloidal direction.
	\item \textit{Toroidal Revolution} which spirals around the toroidal direction. 
\end{enumerate}
\begin{figure}[ht!]
	\begin{center}
		\scalebox{0.9}{\includegraphics{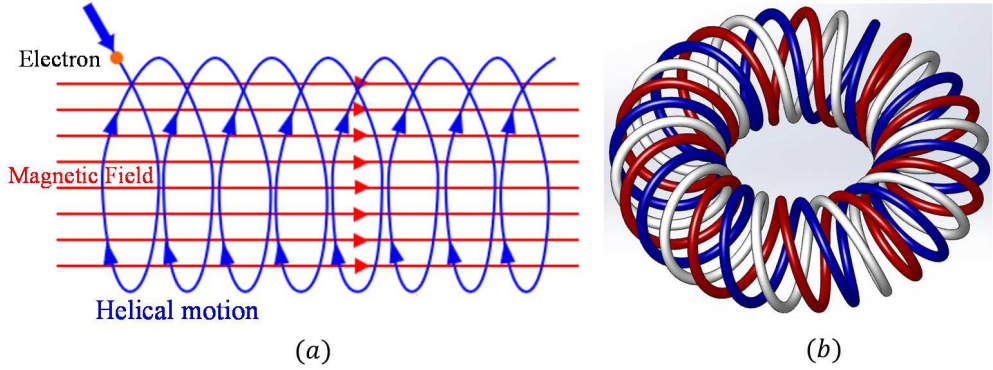}}
	\end{center}
	\caption{$(a)$ The diagram is extracted from  
		https://www.schoolphysics.co.uk
		$(b)$ Toroidal helices in $\VT(r,s)$. The diagram is extracted from https://grabcad.com/library/toroidal-helix-1.}
	\label{fig:Pysics_Toroidal_Helix}
\end{figure}
Here is a basic result in number theory which is available in most textbooks on number theory. 
\begin{theorem} {\rm \cite{HeCl02}}
	\label{thm:gcd}
	Given two integers $a$ and $b$ such that $b$ is larger 
	than $a$, let $d=\gcd(a,b)$, $p = a/d$ and $q = b/d$. 
	Then $V = \{0,1,\ldots,b-1\}$ is partitioned into 
	$V_0, V_1 \cdots, V_{d-1}$ where $V_0 = \{0, a \cdots (q-1)\cdot a\}$, 	$V_1 = \{1, 1+a \cdots 1+(q-1)\cdot a\}$ $\cdots$, $V_{d-1} = \{(d-1), (d-1)+a, \cdots (d-1)+(q-1)\cdot a\}$.
\end{theorem}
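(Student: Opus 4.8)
The plan is to recognize the sets $V_0, \ldots, V_{d-1}$ as the cosets of the cyclic subgroup generated by $a$ in the additive group $(Z_b, \oplus_b)$, and then to confirm the partition by a counting argument. First I would observe that each $V_i = \{i, i+a, i+2a, \ldots, i+(q-1)a\}$, read modulo $b$, is precisely the coset $i \oplus_b \langle a\rangle$, where $\langle a\rangle = \{0, a, 2a, \ldots\} \bmod b$ is the subgroup generated by $a$. The whole statement then reduces to two claims: that each $V_i$ has exactly $q$ elements, and that the $d$ sets $V_0, \ldots, V_{d-1}$ are pairwise disjoint. Together with $|V| = b = dq$, these force the $V_i$ to tile $V$.

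For the size claim, I would show the listed elements $i, i+a, \ldots, i+(q-1)a$ are pairwise distinct modulo $b$. If $i + ma \equiv i + na \pmod b$ with $0 \le m, n \le q-1$, then $b \mid (m-n)a$. Dividing by $d = \gcd(a,b)$ and writing $a = pd$, $b = qd$ with $\gcd(p,q)=1$, this becomes $q \mid (m-n)p$, hence $q \mid (m-n)$ by coprimality of $p$ and $q$; since $|m-n| < q$, this gives $m=n$. Thus $|V_i| = q$ for every $i$, and in particular $|\langle a\rangle| = q$.

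The crux is disjointness, which hinges on the standard fact that the multiples of $a$ modulo $b$ coincide with the multiples of $d$ modulo $b$; that is, $\langle a\rangle = \langle d\rangle = \{0, d, 2d, \ldots, (q-1)d\}$. One inclusion is clear since $a = pd$, and the reverse follows from B\'ezout, as $d = \gcd(a,b)$ can be written $d \equiv \alpha a \pmod b$ for some integer $\alpha$. Granting this, suppose $x \in V_i \cap V_j$ for some $0 \le i < j \le d-1$. Then $i - j \equiv (n-m)a \pmod b$ for suitable $m,n$, so $i-j$ is a multiple of $d$ modulo $b$; but $0 < j - i < d \le b$ makes this impossible. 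Hence the $V_i$ are pairwise disjoint.

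Finally I would assemble the count: the $d$ sets $V_0, \ldots, V_{d-1}$ are pairwise disjoint and each has $q$ elements, so their union has $dq = b$ elements and therefore equals all of $V = \{0, 1, \ldots, b-1\}$, which is exactly the asserted partition. I expect the only genuinely delicate point to be the identity $\langle a\rangle = \langle d\rangle$ (equivalently the coprimality step $\gcd(p,q)=1$); once that is in hand, the remainder is bookkeeping.
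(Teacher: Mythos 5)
Your proof is correct. Note, however, that the paper offers no proof of this statement at all: it is quoted as a known result from the number theory text \cite{HeCl02} (``a basic result in number theory which is available in most textbooks''), so there is no internal argument to compare yours against. Your coset-plus-counting argument --- identifying each $V_i$ with the coset $i \oplus_b \langle a\rangle$, proving $|V_i| = q$ via the coprimality of $p$ and $q$, establishing $\langle a\rangle = \langle d\rangle$ by B\'ezout, and concluding by the count $dq = b$ --- is the standard proof and is sound in all its steps, including the delicate disjointness point where $0 < j - i < d$ rules out $j - i$ being a multiple of $d$ modulo $b$.
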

The above result is applied in the following two theorems.
\begin{theorem} {\rm \cite{Meijer91}}
	\label{thm:Cir_Graph_Part}  
Given two integers $a$ and $b$, let $d$ = $\gcd(a,b)$. Let $C_b(a)$ denote a circulant graph where its vertex set $V = \{0,1, \ldots b-1\}$.  Then there exists a partition $\{S_1, \cdots, S_d\}$ of $V$ such that the following statements are true:
\begin{enumerate}
	\item The induced subgraphs $G[S_1], \cdots, G[S_d]$ are mutually disconnected components.
	\item Each $G[S_i]$, $i=1,\ldots,d$ induces an isometric cycle of equal length in $C_b(a)$.
\end{enumerate}
In other words, the vertex set $V$ as well as the edge set $E$ of $C_b(a)$ are partitioned by isometric cycles $C_1, \cdots, C_d$ of equal length.
\end{theorem}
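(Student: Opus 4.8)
The plan is to exhibit the partition directly from the algebraic structure of $C_b(a)$, viewed as the Cayley graph of the cyclic group $(Z_b, \oplus_b)$ with connection set $\{a, b-a\}$. First I would observe that, provided $a \not\equiv 0$ and $2a \not\equiv 0 \pmod b$, every vertex $i$ has exactly the two neighbours $i \oplus_b a$ and $i \oplus_b (b-a)$, so $C_b(a)$ is $2$-regular and is therefore a disjoint union of cycles. The whole task then reduces to identifying these cycles, showing that they all have the same length, and verifying that each is isometric.

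Next I would apply Theorem \ref{thm:gcd} with the given $a$, $b$, $d=\gcd(a,b)$, $p=a/d$ and $q=b/d$, and set $S_{j+1}=V_j$ for $j=0,\ldots,d-1$, so that each block is the coset $S_{j+1}=\{\, j \oplus_b (t\cdot a) : t=0,1,\ldots,q-1 \,\}$ of the subgroup $\langle a\rangle$ of $Z_b$. The key algebraic fact I need is that the order of $a$ in $Z_b$ is exactly $q=b/d$: the elements $0, a, 2a, \ldots, (q-1)a$ are pairwise distinct modulo $b$, whereas $q\cdot a = (b/d)(pd) = pb \equiv 0 \pmod b$. This is precisely what guarantees that each $V_j$ has $q$ distinct elements, and it is exactly the content supplied by Theorem \ref{thm:gcd}.

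With the partition in hand, Property $1$ is immediate: any edge of $C_b(a)$ joins $i$ to $i\oplus_b a$ or to $i\oplus_b(b-a)$, and in either case the two endpoints differ by a multiple of $a$ and hence lie in the same coset $S_{j+1}$. Thus no edge runs between distinct blocks, so $G[S_1],\ldots,G[S_d]$ are mutually disconnected. To establish Property $2$ I would verify that each $G[S_{j+1}]$ is a single cycle of length $q$: listing the block cyclically as $j,\; j\oplus_b a,\; j\oplus_b 2a,\; \ldots,\; j\oplus_b (q-1)a$, consecutive elements are adjacent, the last wraps back to $j$ because $q\cdot a\equiv 0$, and no additional edges appear since the only neighbours of $j\oplus_b ta$ are $j\oplus_b(t\pm 1)a$. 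Hence $G[S_{j+1}]$ is a cycle $C_q$ (with no chords), and all $d$ components are cycles of the common length $q=b/d$.

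Finally, isometry comes essentially for free from the component structure. Since each $S_i$ is a connected component of $C_b(a)$, every walk between two vertices of $S_i$ must stay inside $S_i$, and therefore $d_{C_b(a)}(u,v)=d_{G[S_i]}(u,v)$ for all $u,v\in S_i$; by the definition of an isometric subgraph recalled in Section \ref{sec:properties-VT}, each cycle $C_i=G[S_i]$ is isometric in $C_b(a)$. Together these steps show that $V$ and $E$ are partitioned by the $d$ isometric cycles $C_1,\ldots,C_d$, each of length $b/d$. The one genuinely delicate point, which I would treat as the main obstacle, is the order-of-$a$ computation underpinning the claim that each coset closes into a cycle of length \emph{exactly} $q$ rather than a shorter cycle; this is also where the degenerate cases must be addressed, since $a\equiv 0$ gives isolated vertices and $2a\equiv 0\pmod b$ (equivalently $b=2a$) yields a matching rather than genuine cycles. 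I would therefore adopt the standing assumption $q\ge 3$ to ensure that the components are bona fide cycles.
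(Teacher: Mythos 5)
Your proof is correct: the paper itself gives no proof of this theorem (it is quoted from Meijer's thesis \cite{Meijer91}), remarking only that Theorem \ref{thm:gcd} is the tool used to prove it, and your argument --- partitioning $V$ into the cosets of $\langle a \rangle$ via Theorem \ref{thm:gcd}, checking that each coset induces a chordless cycle of length $q=b/d$, and obtaining isometry for free from the connected-component structure --- is exactly that intended route. Your explicit treatment of the degenerate cases $a\equiv 0$ and $2a\equiv 0 \pmod{b}$, with the standing assumption $q\ge 3$, is a genuine refinement of a point the theorem statement glosses over.
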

Theorem \ref{thm:gcd} is used as a tool to prove Theorem \ref{thm:Cir_Graph_Part}. In the same way, Theorem \ref{thm:Cir_Graph_Part} will be used as a tool to prove  Theorem \ref{thm:VT_Acute_Edgeset_Part}.
\begin{figure}[ht!]
	\begin{center}
		\scalebox{0.6}{\includegraphics{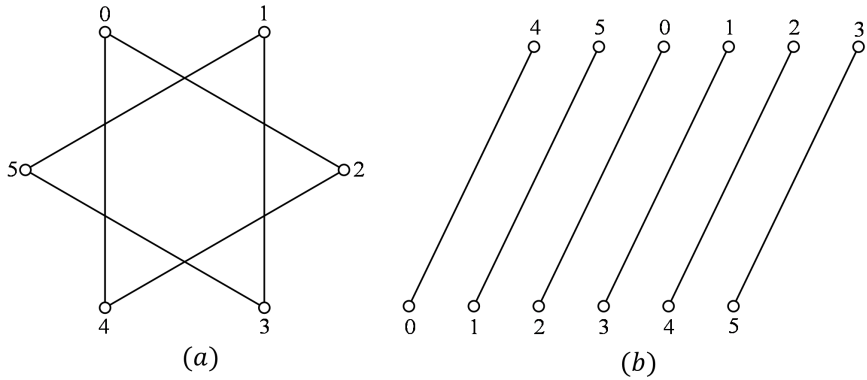}}
	\end{center}
	\caption{$(a)$ Circulant graph $C_6(4)$. $(b)$ The edges of $C_6(4)$ are arranged in a linear order.}
	\label{fig:BW_Circulant_Linear_Edges_4x6S1}
\end{figure}

\begin{theorem}
	\label{thm:VT_Acute_Edgeset_Part}
	Let $d = \gcd(r,s)$ of $\VT(r,s)$ where $s$ is larger than $r$. Then,
	\begin{enumerate}
	\item 
	Exactly $d$ members of $\{\AC(0,2k) : k = 0,1 \ldots s-1\}$ are distinct. 
	\item
	The vertex set $V$ of $\VT(r,s)$ is partitioned by  those $d$  distinct members of $\{\AC(0,2k) : k = 0,1 \ldots s-1\}$.
	\item
	The acute edge set $E_a$ of $\VT(r,s)$ is partitioned by  those $d$ distinct members of $\{\AC(0,2k) : k = 0,1 \ldots s-1\}$. 
	\end{enumerate}
Similar results are true for obtuse cycles $\OC(0,2k)$ and obtuse edge set $E_o$. 
\end{theorem}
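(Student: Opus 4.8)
The plan is to reduce the acute part of $\VT(r,s)$ to the circulant graph $C_s(r)$ and then invoke Theorem~\ref{thm:Cir_Graph_Part}. The bridge is the observation that $\AP(0,2k)$ starts at $(0,2k)$ and terminates at $(0,2k+2r)$; since the second coordinate is read modulo $2s$, we have $2k+2r\equiv 2\big((k+r)\bmod s\big)\pmod{2s}$, so the path ends exactly at the starting vertex of $\AP(0,2((k+r)\bmod s))$. Extending along acute edges to form $\AC(0,2k)$ therefore chains together the paths indexed by $k,\,k+r,\,k+2r,\ldots$ taken modulo $s$, and the orbit closes after $s/d$ steps because $\tfrac{s}{d}\,r\equiv 0\pmod s$. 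In other words, if we label each acute path $\AP(0,2k)$ by its residue $k\in\{0,1,\ldots,s-1\}$, the successor relation inside an acute cycle is precisely $k\mapsto k+r\pmod s$, which is the adjacency of the circulant graph $C_s(r)$.

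First I would make this correspondence a formal bijection: acute paths $\leftrightarrow$ vertices of $C_s(r)$, and the distinct acute cycles of $\VT(r,s)$ $\leftrightarrow$ the cycles into which $C_s(r)$ decomposes. Under this identification the number of distinct members of $\{\AC(0,2k):k=0,\ldots,s-1\}$ equals the number of cycles in $C_s(r)$. Since $\gcd(r,s)=d$, Theorem~\ref{thm:Cir_Graph_Part} tells us that $C_s(r)$ splits into exactly $d$ isometric cycles of equal length that partition both its vertex set and its edge set. Translating back gives statement~(1): exactly $d$ of the acute cycles are distinct.

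For statements~(2) and~(3) I would combine the above with Property~\ref{pro:VT_trivial_properties}. By part~1 of that property the acute paths $\{\AP(0,2k):k=0,\ldots,s-1\}$ already partition the vertex set $V$ and the acute edge set $E_a$. The cycle decomposition of $C_s(r)$ groups the $s$ path-labels into $d$ classes (the orbits of $k\mapsto k+r$, each of size $s/d$), and each acute path lies in exactly one acute cycle. Hence every vertex, and every acute edge, belongs to exactly one of the $d$ distinct acute cycles, which yields the vertex partition~(2) and the acute-edge partition~(3). As a consistency check, each cycle consists of $s/d$ paths of $2r$ acute edges, giving $d\cdot(s/d)\cdot 2r=2rs=|E_a|$, so no acute edge is missed or repeated.

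The obtuse case is entirely symmetric: the obtuse path $\OP(0,2k)$ moves in the $(+1,-1)$ direction and ends at $(0,2k-2r)$, so the successor relation becomes $k\mapsto k-r\pmod s$, i.e.\ the circulant graph $C_s(r)$ again, since the generator $-r$ has the same greatest common divisor $d$ with $s$. The same argument gives $d$ distinct obtuse cycles partitioning $V$ and $E_o$. The main obstacle is the first paragraph: one must verify carefully that ``extending along acute edges'' coincides with the arithmetic successor $k\mapsto k+r\pmod s$ and that the resulting map is a genuine isomorphism onto $C_s(r)$, in particular that distinct labels within one orbit give distinct paths so that the cycle lengths match. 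Once this reduction is pinned down, Theorem~\ref{thm:Cir_Graph_Part} supplies everything else without further computation.
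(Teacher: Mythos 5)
Your proposal is correct and follows essentially the same route as the paper: both reduce the acute structure of $\VT(r,s)$ to the circulant graph $C_s(r)$ (the paper by contracting each path $\AP(0,2k)$ to an edge $(0,2k)(0,2k+2r)$ and relabeling, you by tracking the successor relation $k\mapsto k+r \pmod{s}$ on path labels), then invoke Theorem~\ref{thm:Cir_Graph_Part} together with Property~\ref{pro:VT_trivial_properties} to obtain the count $d$ and the vertex and acute-edge partitions. Your version is in fact somewhat more explicit than the paper's, which argues the reduction pictorially through the example $\VT(4,6)\leadsto C_6(4)$, whereas you pin down the modular arithmetic $2k+2r\equiv 2\bigl((k+r)\bmod s\bigr)\pmod{2s}$ and the orbit count directly.
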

\begin{proof}
	Since each path $\AP(0,2k)$ contains only acute edges, any two members of $\{\AP(0,2k) : k = 0,1 \ldots s-1\}$ are either mutually disjoint or isomorphic. By Property \ref{pro:VT_trivial_properties}, the vertex set $V$ and acute edge set $E_a$ of $\VT(r,s)$ are partitioned by paths \{$\AP(0,2k) : k = 0,1$, $\ldots$, $s-1$\}. 
	The graph $C_b(a)$ and its edges are displayed in Figure \ref{fig:BW_Circulant_Linear_Edges_4x6S1}. we will show that the acute paths $\{\AP(0,2k) : k = 0,\ldots,s-1\}$ are equivalent to the edges of $C_b(a)$. This is achieved by means of the following steps:	
\begin{description}
	\item[Step 1:] 
	First, all the obtuse edges are removed from $\VT(r,s)$. Refer to Figure \ref{fig:BW_Circulant_Linear_Edges_4x6S0}.
	\item[Step 2:] 
	Path $\AP(0,2k)$ is replaced by  edge  $(0,2k)(0, 2k + 2r)$ for $k = 0,\ldots,s-1$ where $(0,2k)$ and $(0, 2k + 2r)$ are the endpoints of $\AP(0,2k)$. This is achieved in two stages which are illustrated in Figure  \ref{fig:BW_Circulant_Linear_Edges_4x6S2}. 
	\item[Step 3:] 
	Label $(0,2k)$ are replaced by label $k$ for $k = 0,\ldots,s-1$. Refer to Figure \ref{fig:BW_Circulant_Linear_Edges_4x6S3}. 
	\item[Step 4:] The resultant  graph is isomorphic to $C_6(4)$ which is displayed in Figure \ref{fig:BW_Circulant_Linear_Edges_4x6S1}.
\end{description}

 Now the theorem follows from Theorem \ref{thm:Cir_Graph_Part}. 
\end{proof}

\begin{figure}[ht!]
	\begin{center}
		\scalebox{0.74}{\includegraphics{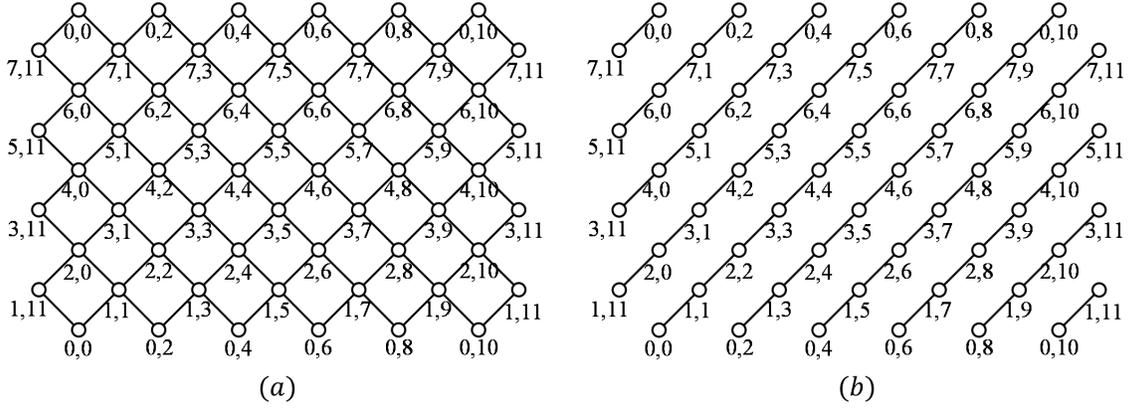}}
	\end{center}
	\caption{$(a)$ Villarceau torus $\VT(4,6)$. $(b)$ The obtuse edges are removed from $\VT(4,6)$.}
	\label{fig:BW_Circulant_Linear_Edges_4x6S0}
\end{figure}

\begin{figure}[ht!]
	\begin{center}
		\scalebox{0.6}{\includegraphics{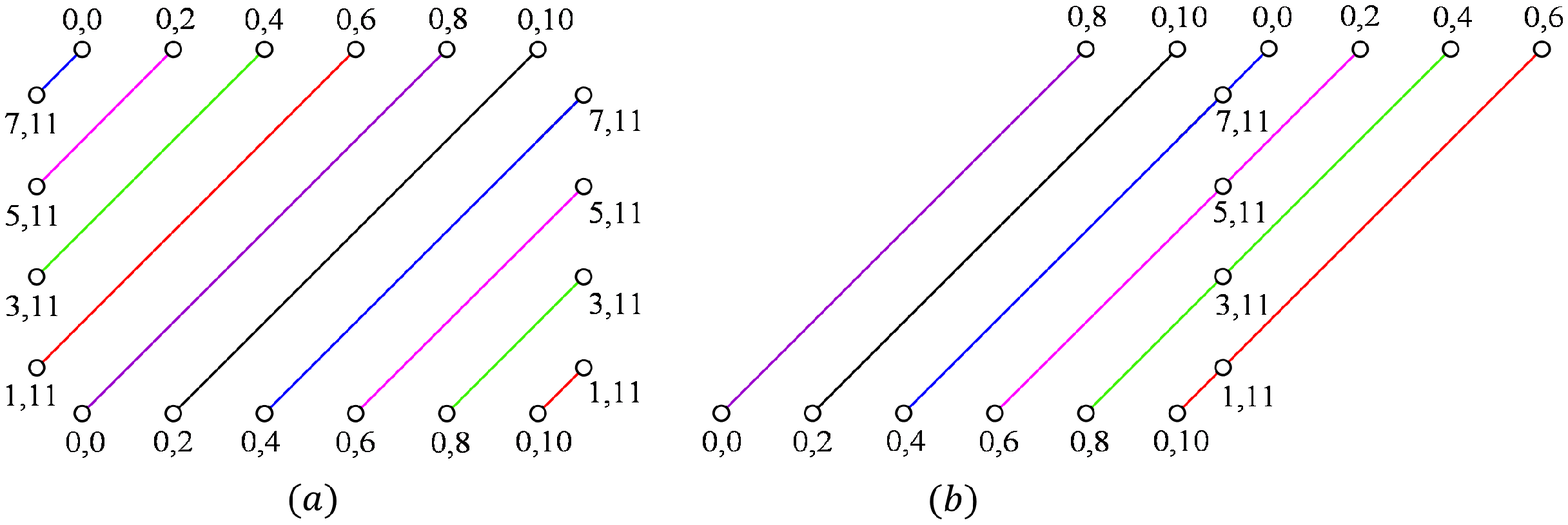}}
	\end{center}
	\caption{$(a)$ After removing obtuse edges from $\VT(r,s)$, In $\VT(r,s)$, the end vertices $(0,2k)$ and $(0, 2k + 2r)$ of $\AP(0,2k)$ are replace an edge $(0,2k)(0, 2k + 2r)$. Some lines  between $(0,2k)$ and $(0, 2k + 2r)$ are broken into two segments which are distinguished by different colors. $(b)$ The broken segments are merged into one straight line.}
	\label{fig:BW_Circulant_Linear_Edges_4x6S2}
\end{figure}
\begin{figure}[ht!]
	\begin{center}
		\scalebox{0.6}{\includegraphics{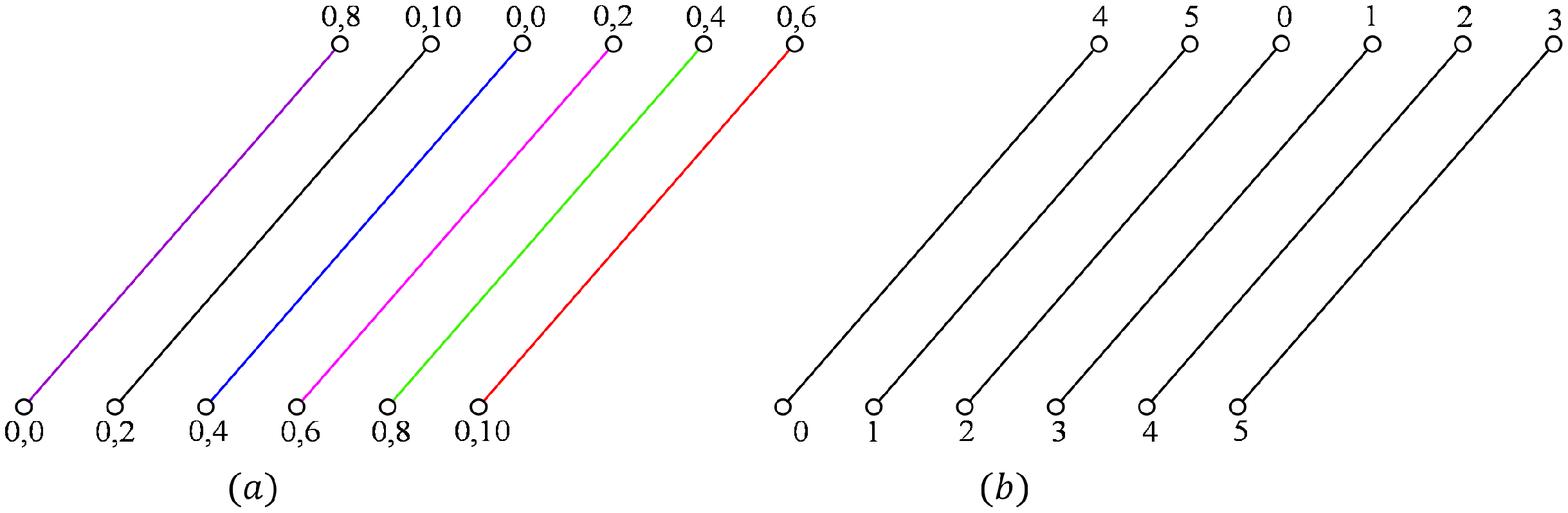}}
	\end{center}
	\caption{$(a)$ Now the acute path $\AP(0,2k)$ is replaced by an edge. $(b)$ The label $(0,2k)$ is replaced by $k$ for $k = 0,\ldots,s-1$. This is isomorphic to $C_6(4)$ which is described in \ref{fig:BW_Circulant_Linear_Edges_4x6S1}$(b)$.}
	\label{fig:BW_Circulant_Linear_Edges_4x6S3}
\end{figure}

Now we define a few more terms which we shall use in this section. First we define row and columns of $\VT(r,s)$:

 $ROW_{2i} = \{(2i,0), (2i,2) \ldots (2i,2k) \ldots (2i,2(s-1)) : i = 0, 1 \ldots r-1\}$
 
 $ROW_{(2i+1)} = \{(2i+1,1), (2i+1,3) \ldots (2i+1,2k+1) \ldots (2i+1,2s-1) : i = 0, 1  \ldots r-1\}$
 
 $COL_{2j} = \{(0, 2j), (2,2j) \ldots (2k,2j) \ldots (2(r-1), 2j) : j = 0, 1 \ldots s-1\}$
 
 $COL_{(2j+1)} = \{(1,2j+1), (3, 2j+1) \ldots (2k+1, 2j+1) \ldots (2r-1, 2j+1) : j = 0, 1 \ldots s-1\}$ 
 
 Given $ROW_i$ and $COL_j$, a cycle $C$ in $\VT(r,s)$ is said to have $p$ poloidal revolutions and $q$ toroidal revolutions if $C$ passes through $ROW_i$ by $p$ times and passes through $COL_j$ by $q$ times. Mathematically, given $ROW_i$ and $COL_j$, a cycle $C$ is said to have $p$ poloidal revolutions and $q$ toroidal revolutions if $|C \cap ROW_i| = p$ and $|C \cap COL_j| = q$.
\begin{theorem}
	\label{thm:Helix_p_q_revolutions}
	Given $\VT(r,s)$, let $d = \gcd(r,s)$, $p=r/d$ and $q=s/d$ where $s$ is assumed to be larger than $r$. Then, cycle $\AC(0,2k)$ has $p$ toroidal revolutions and $q$ poloidal revolutions. 
\end{theorem}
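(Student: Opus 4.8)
The plan is to parametrize the acute cycle explicitly and then reduce both revolution counts to elementary congruence counting. I would write the successive vertices of $\AC(0,2k)$ as $v_t = (t \bmod 2r,\ (2k+t)\bmod 2s)$ for $t = 0,1,2,\ldots$, so that each acute edge advances $t$ by one. First I would confirm that this parametrization reproduces exactly the vertex sequence in the definition of $\AC(0,2k)$, and that the parities of the two coordinates always agree (both even or both odd), so that every $v_t$ is a genuine vertex of $\VT(r,s)$ sitting in a correctly-indexed row and column.

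Next I would pin down the length of the cycle. Since $v_{t_1}=v_{t_2}$ forces $t_1\equiv t_2\pmod{2r}$ and $t_1\equiv t_2\pmod{2s}$, hence $t_1\equiv t_2\pmod{\operatorname{lcm}(2r,2s)}$, the first return to $(0,2k)$ occurs at $t = \operatorname{lcm}(2r,2s) = 2\operatorname{lcm}(r,s) = 2rs/d$. Thus $\AC(0,2k)$ is a simple cycle of length $L = 2rs/d$, which is consistent with the decomposition of the $2rs$ vertices of $\VT(r,s)$ into $d$ acute cycles guaranteed by Theorem \ref{thm:VT_Acute_Edgeset_Part}.

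With $L$ in hand, the two counts are immediate. For a fixed column $COL_m$ (second coordinate $m$), the cycle meets it precisely at those $t\in\{0,\ldots,L-1\}$ with $2k+t\equiv m\pmod{2s}$; this is a single residue class modulo $2s$, and since $2s\mid L$ there are exactly $L/(2s) = r/d = p$ such values. Hence $|\AC(0,2k)\cap COL_m| = p$ for every $m$, i.e. $p$ toroidal revolutions. Symmetrically, a fixed row $ROW_m$ (first coordinate $m$) is met at those $t$ with $t\equiv m\pmod{2r}$, of which there are $L/(2r) = s/d = q$, giving $|\AC(0,2k)\cap ROW_m| = q$ and thus $q$ poloidal revolutions. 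Replacing $(2k+t)$ by $(2k-t)$ throughout handles the obtuse cycle $\OC(0,2k)$ verbatim.

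The arithmetic here is routine; the only genuinely delicate point is bookkeeping the parity conventions. I must verify that when $t\equiv m\pmod{2r}$ the second coordinate $2k+t$ automatically carries the parity demanded by the definition of $ROW_m$ (even second coordinates for even $m$, odd for odd $m$), and likewise for the columns, so that each intersection predicted by the congruence is an actual vertex and none is double-counted along the simple cycle. I would also note that, because $L/(2r)$ and $L/(2s)$ are independent of the index $m$, the numbers of poloidal and toroidal revolutions are well-defined in the precise sense required by the definition preceding the theorem, which in turn justifies using $\AC(0,2k)$ (rather than an arbitrary representative) in the statement.
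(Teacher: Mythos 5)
Your proof is correct, but it takes a genuinely different route from the paper. The paper's argument is structural: it invokes Theorem \ref{thm:VT_Acute_Edgeset_Part} to get that the $d$ distinct acute cycles have equal length and partition the vertex set of $\VT(r,s)$, argues by symmetry that a single acute cycle meets every column (resp.\ every row) in the same number of vertices, and then divides: since the $d$ cycles share out the $r$ vertices of $COL_j$ equally, each cycle gets $r/d = p$ of them, and likewise $s/d = q$ for rows. You instead parametrize $\AC(0,2k)$ explicitly as $v_t = (t \bmod 2r,\ (2k+t) \bmod 2s)$, compute the exact cycle length $L = \operatorname{lcm}(2r,2s) = 2rs/d$ from the congruence condition for a first return, and count column and row intersections as residue classes modulo $2s$ and $2r$ inside $\{0,\ldots,L-1\}$, getting $L/(2s) = r/d$ and $L/(2r) = s/d$ directly. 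What your approach buys is self-containedness and rigor: you never need the partition theorem (you only cite it as a consistency check), and you replace the paper's unproved symmetry claim that $|\AC(0,2k) \cap COL_{j_1}| = |\AC(0,2k) \cap COL_{j_2}|$ for $j_1 \neq j_2$ with an explicit computation; you also obtain the cycle length $2rs/d$ as a by-product, and your parity bookkeeping makes precise why every congruence solution really lands in the claimed row or column. What the paper's approach buys is brevity and reuse of already-established structure, avoiding coordinate arithmetic entirely. Both arguments establish the theorem; yours would in fact also furnish an alternative, more elementary proof of parts of Theorem \ref{thm:VT_Acute_Edgeset_Part} itself.
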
 
\begin{proof}
	Consider two columns $COL_{j_1}$ and $COL_{j_2}$ such that $j_1 \neq  j_2$. Since $\AC(0,2k)$ contains only acute edges, $|\AC(0,2k) \cap COL_{j_1}| = |\AC(0,2k) \cap COL_{j_2}|$. Here, $\AC(0,2k) \cap COL_{j_1}$ is the set of vertices common to both $\AC(0,2k)$ and $COL_{j_1}$. By Theorem \ref{thm:VT_Acute_Edgeset_Part}, we know that $|\AC(0,2k_1)| = |\AC(0,2k_2)|$ for $k_1 \neq k_2$. Also, by Theorem \ref{thm:VT_Acute_Edgeset_Part}, the number of distinct cycles in $\{\AC(0,2k) : k = 0,1 \ldots s-1\}$ is $d$. Since $COL_j$ has $r$ vertices and there are $d$ distinct cycles in $\{\AC(0,2k) : k = 0,1 \ldots s-1\}$, each cycle $\AC(0,2k)$ shares exactly $r/d$ vertices with $COL_j$. It means that $\AC(0,2k)$ has $p$ toroidal revolutions. Similarly, $\AC(0,2k)$ has $q$ poloidal revolutions.
\end{proof}
\begin{figure}[ht!]
	\begin{center}
		\scalebox{1.0}{\includegraphics{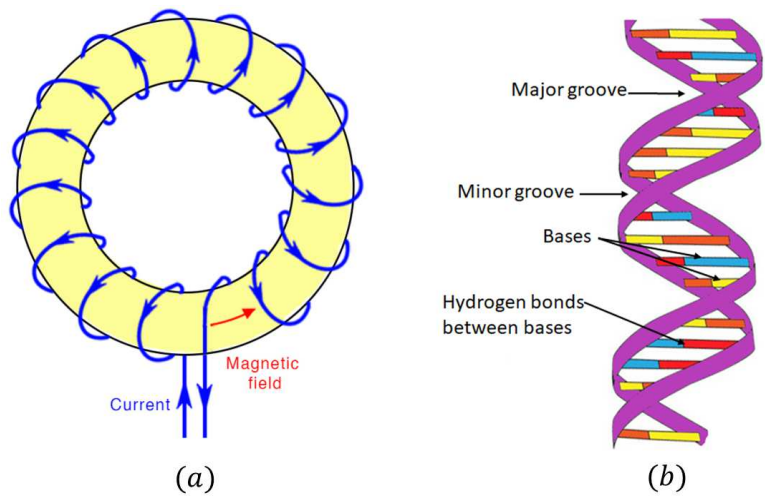}}
	\end{center}
	\caption{$(a)$ Toroidal Magnetic Field. The diagram is retrieved from https://www.sciencefacts.net/toroid-magnetic-field.html $(b)$ Double stranded helix model of DNA. This diagram is retrieved from https://www.vedantu.com/.}
	\label{fig:Pysics_Biology_Application}
\end{figure}

The Heat Transfer in Laminar Flows is modeled by means of toroidal helix \cite{ZhNa20}. The models of the electron illustrate that electrons internally move around a torus \cite{Hest20}. The electrons move on helix trajectories in a uniform magnetic field. It is called Helical Toroidal Electron Model \cite{Cons18, ZhNa20}. Refer to Figure  \ref{fig:Pysics_Biology_Application} $(a)$. \ref{thm:Helix_p_q_revolutions} provides the mathematical interpretation of the Helical Toroidal Electron Model. 

Molecular biologists have modeled DNA strands in terms toroidal helices and call it as double stranded helix model of DNA. A DNA molecule consists of two strands that twist around one another to form a helix. Refer to Figure  \ref{fig:Pysics_Biology_Application} $(b)$.  This twisted-ladder structure double helix (DNA) was discovered by James Watson and Francis Crick in 1953 \cite{WaCr53}.  The double helix structure of DNA contains a major groove and minor groove which are similar to acute and obtuse cycles in $\VT(r,s)$ \cite{Huret06}. 

The toroidal helix in Physics and Molecular Biology is continuous whereas the toroidal helix in Villarceau torus is discrete. 
\section{Isometric and convex cycles in $\VT(r,s)$}
\label{sec:is-convex-cycles-VT}
\begin{theorem}
	\label{thm:iso-convex-cycles-VT(r,r)}
	Cycles $\AC(0,2k)$ and $\OC(0,2k)$, $k = 0,\ldots,r-1$, are isometric but not convex in $\VT(r,r)$. 
\end{theorem}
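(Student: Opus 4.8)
The plan is to straighten the two edge–families by a change of coordinates, reducing $\VT(r,r)$ to a grid torus on which distances are $\ell_1$-distances, so that everything becomes a short lattice computation. For a vertex $(x,y)$ (recall $x\equiv y\pmod 2$) set $u=(x+y)/2$ and $v=(x-y)/2$. An acute step $(x,y)\mapsto(x\pm1,y\pm1)$ then becomes the unit step $(u,v)\mapsto(u\pm1,v)$, while an obtuse step $(x,y)\mapsto(x\pm1,y\mp1)$ becomes $(u,v)\mapsto(u,v\pm1)$. Consequently $\VT(r,r)$ is isomorphic to the grid torus $\mathbb{Z}^2/L$ with $L=\langle(r,r),(r,-r)\rangle=\{(pr,qr):p\equiv q\pmod 2\}$, and its graph metric is the $\ell_1$-metric on the quotient, namely $d_{\VT}\big((u_1,v_1),(u_2,v_2)\big)=\min\{|a|+|b|:(a,b)\equiv(u_2-u_1,v_2-v_1)\pmod L\}$. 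As a sanity check, the $\ell_1$-covering radius of $L$ equals $r$, matching $\diam(\VT(r,r))=r$ from Property \ref{pro:VT_diameter}.

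In these coordinates $\AC(0,2k)$ is the horizontal cycle $\{v=-k\}$ and $\OC(0,2k)$ is the vertical cycle $\{u=k\}$, each of length $2r$ because $(2r,0),(0,2r)\in L$. Since $d=\gcd(r,r)=r$, Theorem \ref{thm:VT_Acute_Edgeset_Part} already guarantees these are the $r$ distinct acute (resp.\ obtuse) cycles partitioning $V$. To prove $\AC(0,2k)$ isometric I take two of its vertices at along-cycle distance $m$ (so $0\le m\le r$) and must show $d_{\VT}=m$. Their coordinate displacement lies in the coset of $(m,0)$, so a competing path corresponds to a representative $(m+\ell_1 r,\ell_2 r)$ with $\ell_1\equiv\ell_2\pmod 2$. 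Minimizing $|m+\ell_1 r|+|\ell_2 r|$: the even choices $\ell_2=0$ give $\min(m,2r-m)=m$, while the best odd choice ($\ell_2=\pm1$, $\ell_1=-1$) gives $|m-r|+r=2r-m\ge m$. Hence no perpendicular detour shortens the path, $d_{\VT}=d_{\AC}=m$, and $\AC(0,2k)$ is isometric; exchanging the roles of $u$ and $v$ gives the same conclusion for $\OC(0,2k)$.

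For non-convexity I produce a shortest path between antipodal cycle-vertices that escapes the cycle. Take $P=(u_0,-k)$ and $Q=(u_0+r,-k)$ on $\AC(0,2k)$; they are at distance $r=\diam(\VT(r,r))$. Because $(r,-r)\in L$, the displacement $(r,0)$ is congruent to $(0,r)$, so alongside the two half-cycles there is a third geodesic of length $r$ that moves $r$ steps purely in the $v$-direction. This path uses only obtuse edges, keeps $u=u_0$ fixed, and so all its interior vertices lie off $\AC(0,2k)$. Thus $\AC(0,2k)$ is isometric but not convex, recovering the recalled fact that these even cycles fail to be convex; by symmetry the analogous perpendicular acute geodesic witnesses the non-convexity of $\OC(0,2k)$.

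The main obstacle is the isometry step: ruling out every ambient shortcut. The coordinate change turns this into a finite lattice minimization, but one must correctly identify $(r,r)$ and $(r,-r)$ — not merely the axis vectors $(2r,0),(0,2r)$ — as generators of $L$, since it is exactly the odd lattice vector $(r,-r)$ that both certifies isometry and, at the antipode, spawns the competing perpendicular geodesic that destroys convexity.
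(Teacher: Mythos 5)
Your proof is correct, and it takes a genuinely different route from the paper's. The paper argues directly on $\VT(r,r)$: for isometry it notes only that the endpoints $(0,2k)$ and $(0,2k+2r)$ of $\AP(0,2k)$ coincide, so the acute path closes into a cycle, and isometry is asserted in that same sentence; for non-convexity it observes that $\AC(0,0)$ and $\OC(0,0)$ are equal in length, edge-disjoint, and meet only at the diagonally opposite vertices $(0,0)$ and $(r,r)$, so half of one cycle is a geodesic joining antipodal vertices of the other while leaving it, and symmetry of $\VT(r,r)$ extends this to all $k$. Your non-convexity witness is the same geometric object (the crossing obtuse geodesic through the antipode), but you obtain it via the coordinate change $u=(x+y)/2$, $v=(x-y)/2$, identifying $\VT(r,r)$ with the $\ell_1$ grid torus $\mathbb{Z}^2/L$, $L=\langle (r,r),(r,-r)\rangle$, under which acute and obtuse edges become horizontal and vertical unit steps. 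What this buys is the isometry half of the theorem: your minimization of $|m+\ell_1 r|+|\ell_2 r|$ over coset representatives with $\ell_1\equiv\ell_2\pmod 2$ genuinely excludes every ambient shortcut, a verification the paper skips entirely---and one that cannot be skipped, since by Theorem \ref{thm:AC_not_isometric_VT(r,s)} these same cycles fail to be isometric in $\VT(r,s)$ when $s$ is not a multiple of $r$, precisely because shortcuts then exist. The paper's argument is shorter and pictorial, leaning on the figure for its distance claims; yours is longer but self-contained, and the single lattice congruence $(r,0)\equiv(0,r)\pmod L$ does double duty, certifying isometry (the best competing representative has length $2r-m\ge m$) and simultaneously producing the perpendicular geodesic that destroys convexity.
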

\begin{proof}
	In the square Villarceau torus $\VT(r,r)$, the end vertices  of path $\AP(0,2k)$ are $(0,2k)$ and $(0,2k+2r) = (0,2k)$. Since the end vertices of  path $\AP(0,2k)$ are the same, it becomes a cycle and isometric. Notice that this is not true for $\VT(r,s)$ where $r \neq s$.
	
	Consider cycles $\AC(0,0)$ and $\OC(0,0)$. They do not have any edges in common. Moreover, $\AC(0,0)$ and $\OC(0,0)$ are of same size and meet only at two vertices $(0,0)$ and $(r,r)$ which are diagonally opposite. In other words, the cycles $\AC(0,0)$ and $\OC(0,0)$ are orthogonal as in Figure \ref{fig:NO_Convex_Cycles_5x5}. Thus, cycles $\AC(0,0)$ and $\OC(0,0)$ are not convex. Since $\VT(r,r)$ is symmetric, we can extend the arguments to show that $\AC(0,2k)$ and $\OC(0,2k)$, $k = 0,1 \ldots s-1$, are not convex. 
\end{proof}
\begin{figure}[ht!]
	\begin{center}
		\scalebox{0.9}{\includegraphics{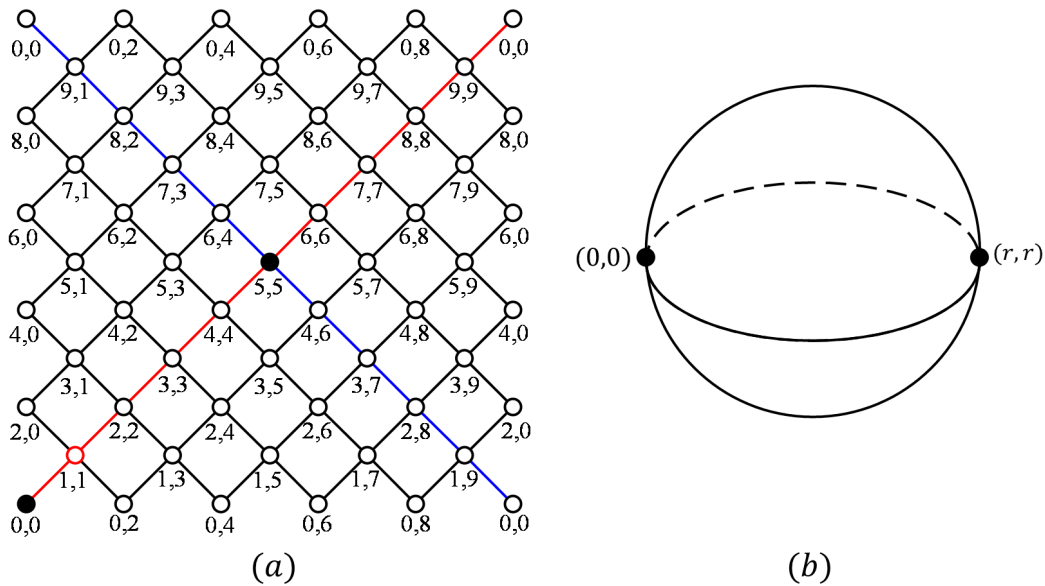}}
	\end{center}
	\caption{$(a)$ Cycles $\AC(0,0)$ is marked in red and $\OC(0,0)$ is marked in blue. $(b) $The two cycles intersect only at vertices $(0,0)$ and $(r,r)$ which are diametrically opposite. Since $|\AC(0,0)| = |\OC(0,0)|$,  $\AC(0,0)$ and $\OC(0,0)$ are not convex cycles.}
	\label{fig:NO_Convex_Cycles_5x5}
\end{figure}
We leave a remark that Theorem \ref{thm:iso-convex-cycles-VT(r,r)} is true for $\VT(r,s)$ where $s$ is a multiple of $r$. 
The next result is interesting that Cycles $\AC(0,2k)$ and $\OC(0,2k)$ are not isometric in $\VT(r,s)$, when $s$ is not a multiple of $r$.  
\begin{figure}[ht!]
	\begin{center}
		\scalebox{0.8}{\includegraphics{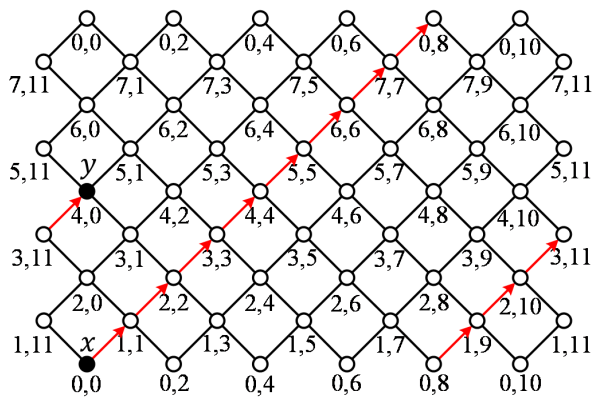}}
	\end{center}
	\caption{Here $x = (2r_1,0)$ and $y = (2r_2,0)$. The distance between $x$ and $y$ along $COL_0$ is strictly less than the distance between $x$ and $y$ along $\AC(0,2k)$.}
	\label{fig:No_Isometric_cycle4x6}
\end{figure}
\begin{theorem}
	\label{thm:AC_not_isometric_VT(r,s)}
	Cycles $\AC(0,2k)$ and $\OC(0,2k)$, $k = 0,\ldots,r-1$, are not isometric in $\VT(r,s)$, when $s$ is not a multiple of $r$.  
\end{theorem}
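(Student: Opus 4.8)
The plan is to produce, on the cycle $\AC(0,2k)$, a pair of vertices $x,y$ for which the distance measured inside the cycle is strictly larger than their distance in $\VT(r,s)$; since a subgraph always satisfies $d_{\VT(r,s)}(x,y)\le d_{\AC(0,2k)}(x,y)$, such a pair certifies that the cycle is not isometric. Following the hint in Figure~\ref{fig:No_Isometric_cycle4x6}, the two vertices will be taken among the points where $\AC(0,2k)$ crosses a single column $COL_0$.

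First I would collect the combinatorial data. Put $d=\gcd(r,s)$, $p=r/d$ and $q=s/d$; the hypothesis that $s$ is not a multiple of $r$ is exactly $d<r$, i.e.\ $p\ge 2$. By Theorem~\ref{thm:VT_Acute_Edgeset_Part} the $d$ distinct acute cycles partition $V$ into equal parts, so $\AC(0,2k)$ has length $2rs/d=2ps$, and by Theorem~\ref{thm:Helix_p_q_revolutions} it meets $COL_0$ in exactly $p$ vertices. Using the paper's own parametrisation, the $t$-th vertex of $\AC(0,0)$ is $(t\bmod 2r,\ t\bmod 2s)$, and it lies in $COL_0$ precisely when its second coordinate vanishes, i.e.\ when $t\equiv 0 \pmod{2s}$. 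Hence the $p$ crossing vertices occupy the cycle positions $0,2s,4s,\dots,2s(p-1)$; since $\gcd(p,q)=1$ their first coordinates run through the full set $\{0,2d,4d,\dots,2d(p-1)\}$. In particular both $x=(0,0)$ and $y=(2d,0)$ lie on $\AC(0,0)\cap COL_0$.

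I would then compare the two distances. Every arc of $\AC(0,0)$ joining two distinct crossing vertices has length a positive multiple of $2s$, so $d_{\AC(0,0)}(x,y)\ge 2s$. For the ambient distance, the explicit zig-zag
\[
(0,0)\to(1,1)\to(2,0)\to(3,1)\to\cdots\to(2d-1,1)\to(2d,0),
\]
which alternates acute and obtuse edges and returns the second coordinate to $0$ after every two steps, joins $x$ to $y$ with $2d$ edges; thus $d_{\VT(r,s)}(x,y)\le 2d$. Since $p\ge 2$ forces $2d\le r<s<2s$, we obtain $d_{\VT(r,s)}(x,y)\le 2d<2s\le d_{\AC(0,0)}(x,y)$, so $\AC(0,0)$ is not isometric. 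The argument is insensitive to the starting vertex, hence applies verbatim to every $\AC(0,2k)$, and the mirror argument (replacing acute edges by obtuse ones and $+1$ by $-1$ in the second coordinate) disposes of the obtuse cycles $\OC(0,2k)$.

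The crux is the lower bound $d_{\AC(0,0)}(x,y)\ge 2s$: one must justify that the acute cycle can return to the column $COL_0$ only after a complete poloidal revolution, during which its second coordinate runs once around $Z_{2s}$ and therefore consumes $2s$ acute edges. Once this ``one full loop is expensive'' observation is in hand, the contrast with the cheap direct path of length $2d$ is immediate, and it also makes transparent why the hypothesis $r\nmid s$ is needed: it is precisely what guarantees $p\ge 2$ (so that a second crossing vertex exists at all) and $2d<2s$ (so that the two distances genuinely differ).
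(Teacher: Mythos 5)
Your proposal is correct and follows essentially the same route as the paper's proof: both pick two vertices where $\AC(0,2k)$ crosses $COL_0$ (guaranteed by $p=r/d\ge 2$ via Theorem \ref{thm:Helix_p_q_revolutions}), bound the distance along the cycle below by $2s$, and exhibit a much shorter connection along the column (the paper bounds it by $r$ using the column cycle of length $2r$; you give an explicit zig-zag of length $2d\le r$). Your version merely adds detail the paper leaves implicit, namely the exact cycle positions of the crossing vertices and the explicit short path.
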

\begin{proof}
Let $d = \gcd(r,s)$. By Theorem \ref{thm:Helix_p_q_revolutions}, $\AC(0,2k)$ has $p = r/d$ toroidal revolutions. Since $s$ is not a multiple of $r$, $d \lneq r$ and hence $p \geq 2$. Consider column $COL_0$ in $\VT(r,s)$. Then $\AC(0,2k)$ intersects $COL_0$ with at least two vertices, say $(2r_1,0)$ and $(2r_2,0)$. Let $x = (2r_1,0)$ and $y = (2r_2,0)$. Now $d_ {COL_0}(x, y)$ $\leq r$ because $|COL_0| = 2r$. Also, $d_{\AC(0,2k)}(x, y) \geq 2s > s > r$. Refer to Figure \ref{fig:No_Isometric_cycle4x6}.
Here $d_ {COL_0}(x, y)$ and $d_{\AC(0,2k)}(x, y)$ are the distances between $x$ and $y$ along $COL_0$ and $\AC(0,2k)$ respectively. Thus, $\AC(0,2k)$  and similarly $\OC(0,2k)$, $k = 0,\ldots,r-1$, are not isometric. 
\end{proof}
Now we shall demonstrate another striking property of Villarceau torus that $\VT(r,s)$ has no convex cycles other than $4$-cycles $C_4$. This is true for any $r$ and $s$.
 
\begin{theorem}
	\label{No_Convex_Cycles}
	Villarceau torus $\VT(r,s)$ has no convex cycles other than $C_4$. 
\end{theorem}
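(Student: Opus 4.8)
\quad The plan is to reduce the statement to a local, grid-like analysis of geodesics. First I would record the distance formula in $\VT(r,s)$. Since every edge shifts a vertex by $(\pm1,\pm1)$, a walk of $n$ steps realizes exactly those displacements $(\Delta x,\Delta y)$ with $|\Delta x|,|\Delta y|\le n$ and $\Delta x\equiv\Delta y\equiv n\pmod 2$. Writing $\rho_n(a,b)=\min\{(a-b)\bmod n,\,(b-a)\bmod n\}$ for circular distance, this gives $d_{\VT}((x_1,y_1),(x_2,y_2))=\max\{\rho_{2r}(x_1,x_2),\,\rho_{2s}(y_1,y_2)\}$, which is consistent with $\diam(\VT(r,s))=\max\{r,s\}$ in Property~\ref{pro:VT_diameter}. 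From this I would extract the decisive local fact: two vertices at distance $2$ admit a \emph{unique} geodesic exactly when they lie on a common acute path or a common obtuse path (displacement of the form $(\pm2,\pm2)$ along one diagonal), whereas if one of the two steps is acute and the other obtuse --- a \emph{turn} --- then the displacement is $(\pm2,0)$ or $(0,\pm2)$ and there are precisely two geodesics, namely the two sides of the unit $C_4$ spanned by the endpoints.

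Armed with this, let $C$ be a convex cycle with $|C|>4$, and recall that a convex cycle is isometric. I would split on whether $C$ ever turns. Suppose some vertex $x$ of $C$ is a turn, say $C$ uses the acute edge $wx$ and the obtuse edge $xz$; then $w$ and $z$ satisfy $d_{\VT}(w,z)=2$ and the two geodesics $w\,x\,z$ and $w\,x'\,z$, through the opposite corner $x'$ of the unit square, must \emph{both} lie in $C$ by convexity. Hence all four edges $wx,\,xz,\,zx',\,x'w$ of that square belong to $C$. But then each of $w,x,z,x'$ already carries its two incident $C$-edges inside the square, so $C$ must close up as exactly this $4$-cycle, contradicting $|C|>4$. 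Therefore a convex cycle longer than $C_4$ can contain no turn.

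If on the other hand $C$ never turns, then all its edges have one type, and since a simple cycle cannot reverse direction at a vertex, $C$ is a monotone acute loop or a monotone obtuse loop; by the uniqueness statement in Property~\ref{pro:VT_trivial_properties} it is therefore one of the cycles $\AC(0,2k)$ or $\OC(0,2k)$. But Theorems~\ref{thm:iso-convex-cycles-VT(r,r)} and~\ref{thm:AC_not_isometric_VT(r,s)} show these are never convex once they are genuine long cycles --- they fail to be isometric when $s$ is not a multiple of $r$, and they meet the transverse cycle orthogonally (in only two antipodal vertices) when it is --- contradicting convexity. Combining the two cases, no convex cycle of length greater than $4$ exists, while the unit squares are readily convex, which is the claim. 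The step I expect to be the main obstacle is making the distance-$2$ geodesic count fully rigorous in the presence of toroidal wrap-around: on the smallest tori (roughly $r\le 2$ or $s=1$) the identifications behind $\rho_{2r}$ and $\rho_{2s}$ can manufacture extra distance-$2$ shortcuts and can even make $\AC(0,2k)$ itself a $4$-cycle, so the counts above must be justified under a mild size hypothesis and the finitely many remaining small Villarceau tori verified directly.
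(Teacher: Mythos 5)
Your proposal is correct and follows essentially the same route as the paper: pure acute or pure obtuse cycles are exactly the cycles $\AC(0,2k)$ and $\OC(0,2k)$ (via the uniqueness in Property~\ref{pro:VT_trivial_properties}) and are excluded by Theorems~\ref{thm:iso-convex-cycles-VT(r,r)} and~\ref{thm:AC_not_isometric_VT(r,s)}, while any mixed cycle contains a turn whose two edges span a unit $C_4$, and convexity applied to the opposite corner of that square gives the contradiction. Your rendering of the square step (convexity forces all four square edges into $C$, so $2$-regularity makes $C$ the square itself) is just the contrapositive of the paper's version (some square edge is missing from $C$, so the isometric path through the fourth corner leaves $C$), and your explicit distance formula and small-torus caveats are refinements of, not departures from, that argument.
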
 
\begin{proof}
	Cycles in $\{\AC(0,2k) : k = 0,1 \ldots s-1\}$ are the only cycles which have only acute edges. In the same way, cycles in $\{\OC(0,2k): k = 0,1 \ldots s-1\}$ are the only cycles which have only obtuse edges. By Theorem \ref{thm:iso-convex-cycles-VT(r,r)} and  \ref{thm:AC_not_isometric_VT(r,s)}, $\AC(0,2k)$ and $\OC(0,2k)$ are not convex. 
	
	Suppose there is a cycle $C_k$, $k \neq 4$, in $\VT(r,s)$ such that $C_k$ has both acute and obtuse edges. It means that there exist two adjacent edges $ab$ and $bc$ in $C_k$ such that $ab$ is acute and $bc$ is obtuse. These two edges $ab$ and $bc$ lie in some $C_4 = \{a,b,c,d\}$. Since $C_4$ does not lie inside any cycle, $d$ does not belong to $C_k$. Now vertices $a$ and $c$ have an isometric path $adc$ which do not lie inside $C_k$. Thus, $C_k$ is not convex. 
\end{proof}
\begin{remark}
	The poloidal and toroidal cycles of ring torus $C_r\cp C_s$ are convex whereas $\VT(r,s)$ has no convex cycles other than $C_4$.
\end{remark}

\section{Villarceau torus does not admit convex edgecuts}
\label{sec:VT-no-convex-edgecut}
A set $S$ of edges is said to be a \textit{$U$-$W$ edgecut} if $G\setminus S$ has exactly two connected components $G[U]$ and $G[W]$ where $V =  U \cup W$, $U \neq \emptyset$ and $W \neq \emptyset$. 
When both the disconnected components $G[U]$ and $G[W]$ are convex, the edgecut $S$ is said to be a \textit{convex edgecut} of $G$.
For the sake of clarity, whenever it is required, we call an edgecut as a $(U,W)$-edgecut.

\begin{theorem}
	\label{thm:VT-no-convex-edgecut}
	Villarceau torus $\VT(r,s)$ has no convex edgecuts. 
\end{theorem}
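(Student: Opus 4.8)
The plan is to argue by contradiction: assume $\VT(r,s)$ admits a convex edgecut $S$ whose removal leaves two convex components $G[U]$ and $G[W]$. I would first record two structural facts about the quadrangulation of the torus by the $C_4$ faces: every acute (resp. obtuse) edge lies in exactly two such faces, and in each face the two acute edges are opposite and the two obtuse edges are opposite, so the edge types alternate around a face. These follow from $4$-regularity together with the $C_4$-gadget already used in the proof of Theorem \ref{No_Convex_Cycles}.

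The heart of the local analysis is a ``clean split'' lemma. Suppose a face $Q=u\,w\,x\,y$ (in cyclic order) contains a cut edge $uw$ with $u\in U$ and $w\in W$. Here $u,x$ is the diagonal distance-$2$ pair, so $u\text{-}w\text{-}x$ is an isometric $u,x$-path; if $x\in U$ this path leaves $U$ through $w$, contradicting convexity of $U$, whence $x\in W$. Symmetrically, applying convexity of $W$ to the diagonal pair $w,y$ forces $y\in U$. Thus a face meeting $S$ meets it in exactly two opposite edges, which are therefore of the same type. I would then derive two consequences. First, no vertex is incident to both an acute and an obtuse cut edge: a vertex $v\in U$ with cut edges to $a,b\in W$ would lie with $a,b$ in a common face in which both of $v$'s face-neighbors are in $W$, leaving no valid clean split. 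Second (propagation), each acute cut edge is forced by the clean-split lemma to be parallel to another acute cut edge in each of its two faces; following these pairings, the acute cut edges close into ``ladders'' whose rungs are the acute cut edges and whose two side rails are obtuse edges, one rail in $U$ and one in $W$. Since the two consecutive rail edges at a rail vertex are exactly its two obtuse edges, each rail is a full obtuse cycle: there is an obtuse cycle $\OC_U\subseteq U$ and a neighboring obtuse cycle $\OC_W\subseteq W$, joined rung-by-rung, and symmetrically with acute/obtuse exchanged.

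Finally I would convert this rigid ladder into a contradiction. The rail cycle $\OC_U$ has length greater than $4$, so by Theorems \ref{thm:iso-convex-cycles-VT(r,r)} and \ref{thm:AC_not_isometric_VT(r,s)} it is neither convex nor (when $s$ is not a multiple of $r$) isometric. Using the poloidal-cycle shortcut from the proof of Theorem \ref{thm:AC_not_isometric_VT(r,s)}, I would locate two vertices $x,y\in\OC_U\subseteq U$ whose geodesic runs along a short poloidal cycle and crosses a rung into $\OC_W\subseteq W$, contradicting convexity of $U$. In the degenerate cases $r=s$ and $s$ a multiple of $r$, where $\OC_U$ is isometric, the crossing is supplied instead by Theorem \ref{thm:iso-convex-cycles-VT(r,r)}: the orthogonal acute cycle meeting $\OC_U$ at diametrically opposite vertices gives the required alternative geodesic through $W$.

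The main obstacle is precisely this last step: turning the existence of a shortcut for $\OC_U$ into a shortcut that provably exits $U$, rather than one that merely dips to the $U$-side of the rail. I expect to handle it by exploiting the two-sided symmetry of the acute edges leaving $\OC_U$ (each rail vertex has one acute edge that is a rung into $W$ and one leading deeper into $U$), which should guarantee a geodesic realization passing through $W$; the wrap-around of the ladder on the torus, and the exact length of $\OC_U$ relative to the poloidal cycle, will need separate bookkeeping via Theorem \ref{thm:Helix_p_q_revolutions}.
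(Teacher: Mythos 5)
Your first two steps coincide with the paper's own argument: your ``clean split'' lemma is exactly the paper's $C_4$-propagation (a cut edge forces the opposite edge of each incident face to be a cut edge of the same type, with the face split into two $U$-vertices and two $W$-vertices), and your ladder with two rails is exactly the paper's conclusion that the entire transverse cycle through $a$ lies in $U$ and the entire transverse cycle through $b$ lies in $W$ (in the paper's notation, $\AC(0,0)\subseteq U$ and $\AC(0,2)\subseteq W$ for an obtuse cut edge $ab$). Up to that point your write-up is correct, and in fact more carefully organized than the paper's.

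The gap is the one you yourself flag, and it is fatal as stated: non-convexity or non-isometry of the rail cycle $\OC_U$ only yields a geodesic between two rail vertices that leaves the \emph{cycle}; nothing forces it to leave $U$, since $U$ may be far larger than the rail, and the proposed repair via ``two-sided symmetry'' plus bookkeeping with Theorem \ref{thm:Helix_p_q_revolutions} is a hope, not an argument. The paper closes this step with a different, much simpler choice of geodesic: take the geodesic that \emph{begins with the cut edge itself}. Concretely, for an obtuse cut edge $ab$ with $a=(r,r)\in U$ and $b=(r-1,r+1)\in W$, walk $r$ obtuse steps starting with $ab$; this ends at $(0,2r)$, which lies on the acute cycle $\AC(0,0)$ through $a$ and hence in $U$ by the rail structure you already established. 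The walk is isometric because each edge changes the first coordinate by exactly $\pm 1$, so changing it by $r$ modulo $2r$ requires at least $r$ steps. Thus both endpoints lie in $U$ while the interior vertex $b$ lies in $W$ by hypothesis, contradicting convexity of $U$ with no need to track where a shortcut ``dips.'' The same device closes your acute-rung case: an acute walk of $s$ steps from a rail vertex, starting with the rung, returns to the obtuse rail in $U$ after passing through $W$ at its second vertex. This single observation (a transverse geodesic of length $r$, resp.\ $s$, launched along the cut edge lands back on the rail cycle it started from) is the missing idea; substituting it for your final paragraph completes the proof.
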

\begin{proof}
	Suppose there exists a $U$-$W$ edgecut $S$ of $\VT(r,s)$ such that $U \neq \emptyset$ and $W \neq \emptyset$. Then, there exists an edge $ab$ from the edgecut $S$ such that $a \in U$ and $b \in  W$. 
	Now there are two cases that edge $ab$ is either acute or obtuse. Let us consider the case that $ab$ is an obtuse edge. For the sake of simplicity, we assume that vertex $a$ is $(r,r)$. This is possible. Since $\VT(r,s)$ is symmetrical, after fixing that $a =(r,r)$, the origin and the coordinate system of $\VT(r,s)$ may be decided. Since $b$ is adjacent to $a$, there are two cases that $b$ is either $(r-1,r+1)$ or $(r+1,r-1)$. Let us consider the case that $b$ is $(r-1,r+1)$.  It means that $b \in \AC(0,2)$.
	\begin{figure}[ht!]
		\begin{center}
			\scalebox{0.9}{\includegraphics{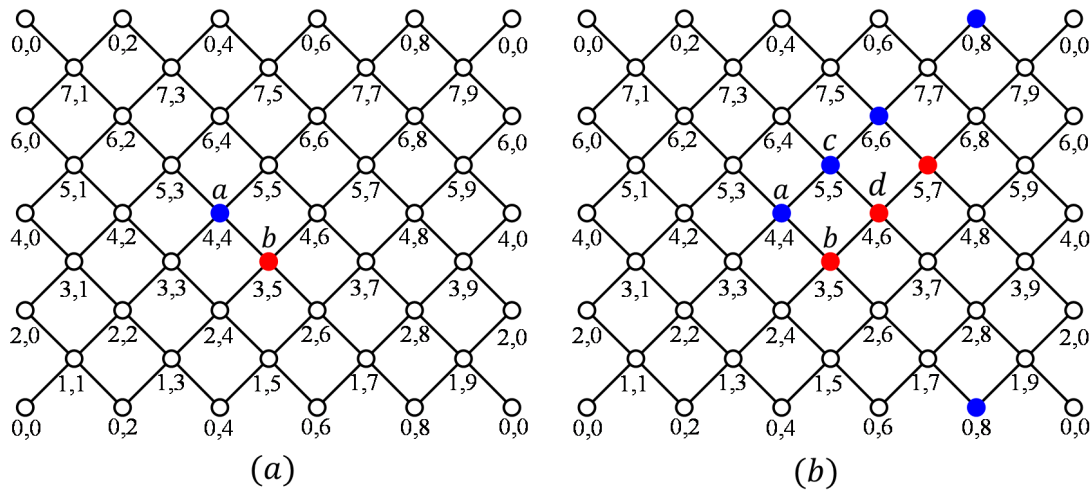}}
		\end{center}
		\caption{The vertices of $U$ are in blue and the vertices of $W$ are in red. $(a)$ $a \in U \cap \AC(0,0)$ and $b \in W \cap \AC(0,2)$. $(b)$ $\AC(0,0) \subseteq U$ and $\AC(0,2) \subseteq W$. Notice that a red vertex lies on an isometric path between two blue vertices.} 
		\label{fig:No_Convex_Edgecut}
	\end{figure}
	Now, $a \in U \cap \AC(0,0)$ and $b \in W \cap \AC(0,2)$. Let $c = (r,r+2)$ and $d = (r+1,r+1)$. Observe that $d$ is adjacent to $a$ and lies in $\AC(0,0)$. In the same way, $c$ is adjacent to $b$ and lies in $\AC(0,2)$. Consider $C_4$ cycle $\{a,b,c,d\}$. Since $a$ is in $U$ and $b$ is in $W$, vertex $d$ must be in $U$ and vertex $c$ must be in $W$. Otherwise, it will contradict our assumption that $U$ and $W$ are convex. In other words, when $a \in U \cap \AC(0,2k)$ and $b \in W \cap \AC(0,2k+2)$, its adjacent  vertices $d \in U \cap \AC(0,2k)$ and $c \in W \cap \AC(0,2k+2)$. Continuing the same logic, we show that $\AC(0,0)$ is a subset of $U$ and $\AC(0, 2)$ is a subset of $W$. 
	
	Now consider the vertex $(r+r,j+r)$ which is $(2r,j+r)$ =  $(0,j+r)$. Notice that $(0,j+r)$ is in $U \cap AC(0,2k)$. Now consider the obtuse path $\OP((r,r),(0,j+r))$ between $(r,r)$ and $(0,j+r)$. Since the length of $\OP((r,r),(0,j+r))$ is $r$, it is isometric. Now vertex $b = (r-1,r+1) \in W$ and lies in the isometric path $\OP((r,r),(0,j+r))$ where $(r,r)$ and $(0,j+r)$ are in $U$. It implies that $U$ is not convex. This is a contradiction. 
	
	Initially we assumed that $ab$ is an obtuse edge. If $ab$ is an acute edge, we will argue with obtuse cycles $\OC(0,0)$ and $\OC(0, 2)$ in the place of $\AC(0,0)$ and $\AC(0,0)$. The other case $b$ is $(r+1,r-1)$ is also straightforward.  
\end{proof}
\section{Extension of the cut method}
\label{sec:convex-cut}
{\red  UNDER PREPARATION}

The combinatorial graph problems such as embedding of one graph into another graph, computation of Wiener Index, estimating forwarding indices or designing routing algorithms require convex edgecuts. The cut method which is built on convex edgecuts is a powerful tool to solve some graph combinatorial problems such as average distance of a graph, edge forwarding index, embedding and routing algorithms etc \cite{Klavzar08, KlNa14, MaRa08}. The basic requirement of the cut method is a convex edgecut partition in some forms \cite{KlRo14}. The cut method was introduced by Klavžar el al \cite{KlGu95}. It was further extended  by Chepoi el al \cite{ChDe97} in the form of $\ell_1$-graphs. The $k$-cut method was designed in the form of quotient graphs by means of  $\Theta^*$-relation \cite{GrWi85, Klavzar08, KlNa14}. 

There are graphs which do not admit convex edgecuts. Notice that an odd cycle admits neither a convex edgecut nor an edgecut partition. However, there are graphs which admit edgecut partitions but do not admit a convex edgecut partition. Villarceau torus is one such graph. 
Here we extend the convex cut method to graphs which admit edgecut partitions but do not admit a convex edgecut partition.

A partition $\{E_1,\cdots, E_p\}$ of $E$ is said to be an \textit{edgecut partition} if each $E_i$ is an edgecut of $G$ for $i = 1, \ldots, p$. 
A partition $\{E_1,\cdots, E_p\}$ is said to be a \textit{convex edgecut partition} if each $E_i$ is a convex edgecut of $G$ for $i = 1, \ldots, p$.

In this paper, $\mathcal{C}$ denotes a set of paths in $G$. When $\mathcal{C}$ contains  $|V|(|V|-1)/2$ paths, it is called routing \cite{XuXu13}. The congestion $\Pi(G,\mathcal{C},e)$ of an edge $e$ is the number of paths of $\mathcal{C}$ passing through $e$. 
Given a set $S$ of edges, $\Pi(G,\mathcal{C},S)$ = $\sum_{e \in S}{\Pi(G,\mathcal{C},e)}$.
Also, we define
$\Pi(G,\mathcal{C}) = \max_{e \in E} \Pi(G,\mathcal{C},e)$ 
and 
$\Pi(G) = \min_{\mathcal{C} \in G} \Pi(G,\mathcal{C})$

\begin{theorem}
	\label{thm:Sum-Paths}
Let $\mathcal{C}$ denote a set of paths in a graph $G$.  Let $\{E_1,\cdots,E_p\}$ be an edgecut partition of $G$ where each $E_i$ is a $(U_i,W_i)$-edgecut of $G$. Given $\mathcal{C}$ and $\{E_1,\cdots,E_p\}$ of $G$, define the following terms:

$k_{U_i,W_i}$ = $\sum_{x \in U_i, y \in W_i, P(x,y)\in \mathcal{C}}{|E_i\cap P(x,y)|}$
\\

$k_{U_i}$ = $\sum_{x,y \in U_i, P(x,y)\in \mathcal{C}}{|E_i\cap P(x,y)|}$
\\

$k_{W_i}$ = $\sum_{x,y \in W_i, P(x,y)\in \mathcal{C}}{|E_i\cap P(x,y)|}$
\\

\noindent
where $|E_i\cap P(x,y)|$ is the number of edges common to $E_i$ and $P(x,y)$. Then
$$\sum_{P(x,y)\in \mathcal{C}}{|P(x,y)|} = \sum_{i=1}^{p}{(k_{U_i,W_i}+k_{U_i}+k_{W_i})}$$
\end{theorem}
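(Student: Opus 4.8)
The plan is to prove the identity by a Fubini-style interchange of finite sums, exploiting the single structural fact that an edgecut partition $\{E_1,\cdots,E_p\}$ is, by definition, a partition of the edge set $E$. No distance or convexity property of $\VT(r,s)$ is needed here; the statement is purely a counting identity about paths and an edge partition.

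First I would record the elementary observation that, since $\{E_1,\cdots,E_p\}$ partitions $E$, the edges of any fixed path $P(x,y)$ are distributed among the blocks $E_1,\cdots,E_p$ with no edge omitted and none counted twice. Hence the length of each path decomposes as
$$|P(x,y)| = \sum_{i=1}^{p} |E_i\cap P(x,y)|.$$
Summing over all paths of $\mathcal{C}$ and interchanging the two finite sums gives
$$\sum_{P(x,y)\in \mathcal{C}}{|P(x,y)|} = \sum_{i=1}^{p} \sum_{P(x,y)\in \mathcal{C}}{|E_i\cap P(x,y)|},$$
which already isolates the contribution of each block $E_i$.

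Second, for a fixed block $E_i$ — which is a $(U_i,W_i)$-edgecut — I would use that removing $E_i$ leaves exactly the two components $G[U_i]$ and $G[W_i]$, so that $V = U_i\cup W_i$ with $U_i\cap W_i=\emptyset$. Consequently every path $P(x,y)\in\mathcal{C}$ is classified unambiguously by the location of its two endpoints into exactly one of three mutually exclusive, collectively exhaustive types: both endpoints in $U_i$, both in $W_i$, or one in each. Splitting the inner sum accordingly reproduces precisely the three defined quantities,
$$\sum_{P(x,y)\in \mathcal{C}}{|E_i\cap P(x,y)|} = k_{U_i}+k_{W_i}+k_{U_i,W_i},$$
and substituting this into the previous display yields the claim.

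The two summation interchanges are routine; the one point demanding care, and what I expect to be the main obstacle, is the three-way classification of the endpoint pairs. Here I must match the indexing convention of $\mathcal{C}$ (each path indexed by its unordered pair of endpoints) against the index sets written beneath $k_{U_i}$, $k_{W_i}$ and $k_{U_i,W_i}$, so that each mixed path is counted once and only once in $k_{U_i,W_i}$ — as the pair with its $U_i$-endpoint named $x$ and its $W_i$-endpoint named $y$ — while each same-side path is counted once in $k_{U_i}$ or $k_{W_i}$. Verifying that this classification is genuinely a partition of $\mathcal{C}$, which rests entirely on the disjoint decomposition $V = U_i\cup W_i$ guaranteed by the edgecut, is the crux; once it is established, the identity follows by linearity of the sum.
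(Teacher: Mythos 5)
Your proof is correct and follows essentially the same route as the paper: both arguments reduce the total path length to a per-edgecut sum and then split that sum into the three endpoint-location cases $k_{U_i}$, $k_{W_i}$, $k_{U_i,W_i}$ using the disjoint decomposition $V=U_i\cup W_i$. The only difference is cosmetic -- the paper cites the identity $\sum_{P(x,y)\in\mathcal{C}}|P(x,y)|=\sum_{i=1}^{p}\Pi(G,\mathcal{C},E_i)$ as well known from the forwarding-index literature, whereas you derive it directly via the interchange $|P(x,y)|=\sum_{i=1}^{p}|E_i\cap P(x,y)|$, which is exactly the same double counting made explicit.
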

\begin{proof}
	It is well-known \cite{XuXu13} that $\sum_{P(x,y)\in \mathcal{C}}{|P(x,y)|}$ = $\sum_{i=1}^{p}{\Pi(G,\mathcal{C},E_i)}$.
	Consider $E_i$ where $E_i$  is a $(U_i,W_i)$-edgecut. Initially $\Pi(G,\mathcal{C},E_i) = 0$. Now, there are only three possibilities:
	\begin{enumerate}
		\item  For $x \in U_i$ and $y \in W_i$, $\Pi(G,\mathcal{C},E_i)$ is incremented by $k_{U_i,W_i}$.
		\item  For $x,y \in U_i$, $\Pi(G,\mathcal{C},E_i)$ is incremented by $k_{U_i}$.
		\item  For $x,y \in W_i$, $\Pi(G,\mathcal{C},E_i)$ is incremented by $k_{W_i}$.
	\end{enumerate}
	Thus, $\Pi(G,\mathcal{C},E_i)$ = $k_{U_i,W_i}+k_{U_i}+k_{W_i}$. Hence, $\sum_{P(x,y)\in \mathcal{C}}{|P(x,y)|}$ = $\sum_{i=1}^{p}{\Pi(G,\mathcal{C},E_i)}$ = $\sum_{i=1}^{p}{(k_{U_i,W_i}+k_{U_i}+k_{W_i})}$
\end{proof}

\begin{figure}[ht!]
	\begin{center}
		\scalebox{0.8}{\includegraphics{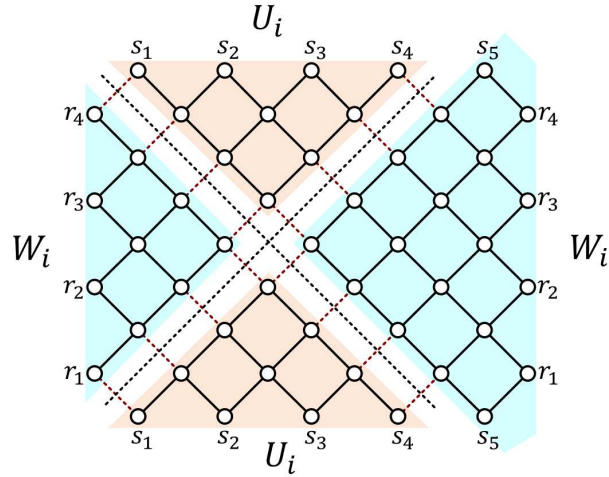}}
	\end{center}
	\caption{$X$ type of edgecuts. The edges of edgecut $S$ are depicted in dotted lines. The deletion of S disconnects $G$ into two connected components $G{U}$ and $G[W]$. The disconnected components $G{U}$ and $G[W]$ are in two different colors.  }
	\label{fig:WI_UW_Partition_4x5}
\end{figure}
\begin{figure}[ht!]
	\begin{center}
		\scalebox{0.85}{\includegraphics{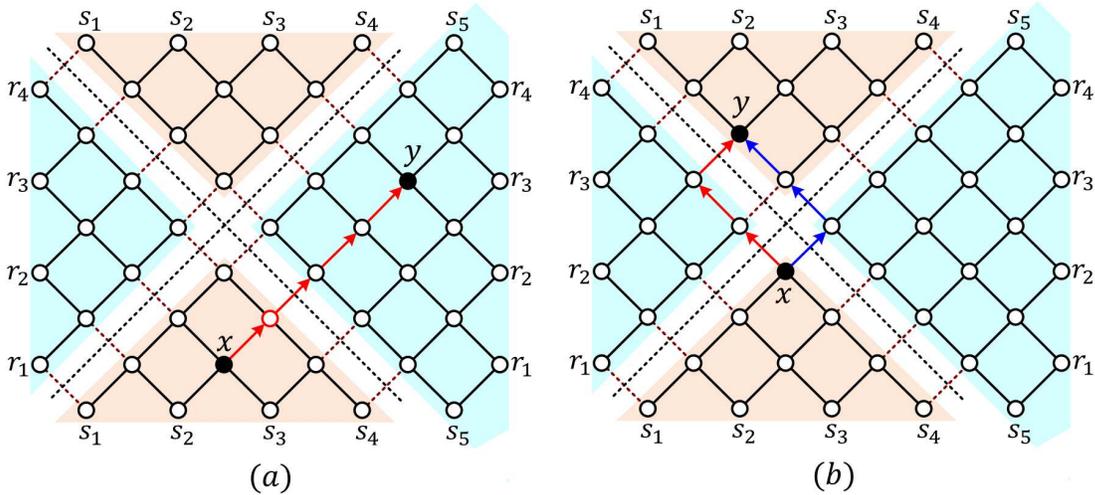}}
	\end{center}
	\caption{to be added.}
	\label{fig:WI_UW_Partition_Case1_2_4x5}
\end{figure}
\begin{figure}[ht!]
	\begin{center}
		\scalebox{0.85}{\includegraphics{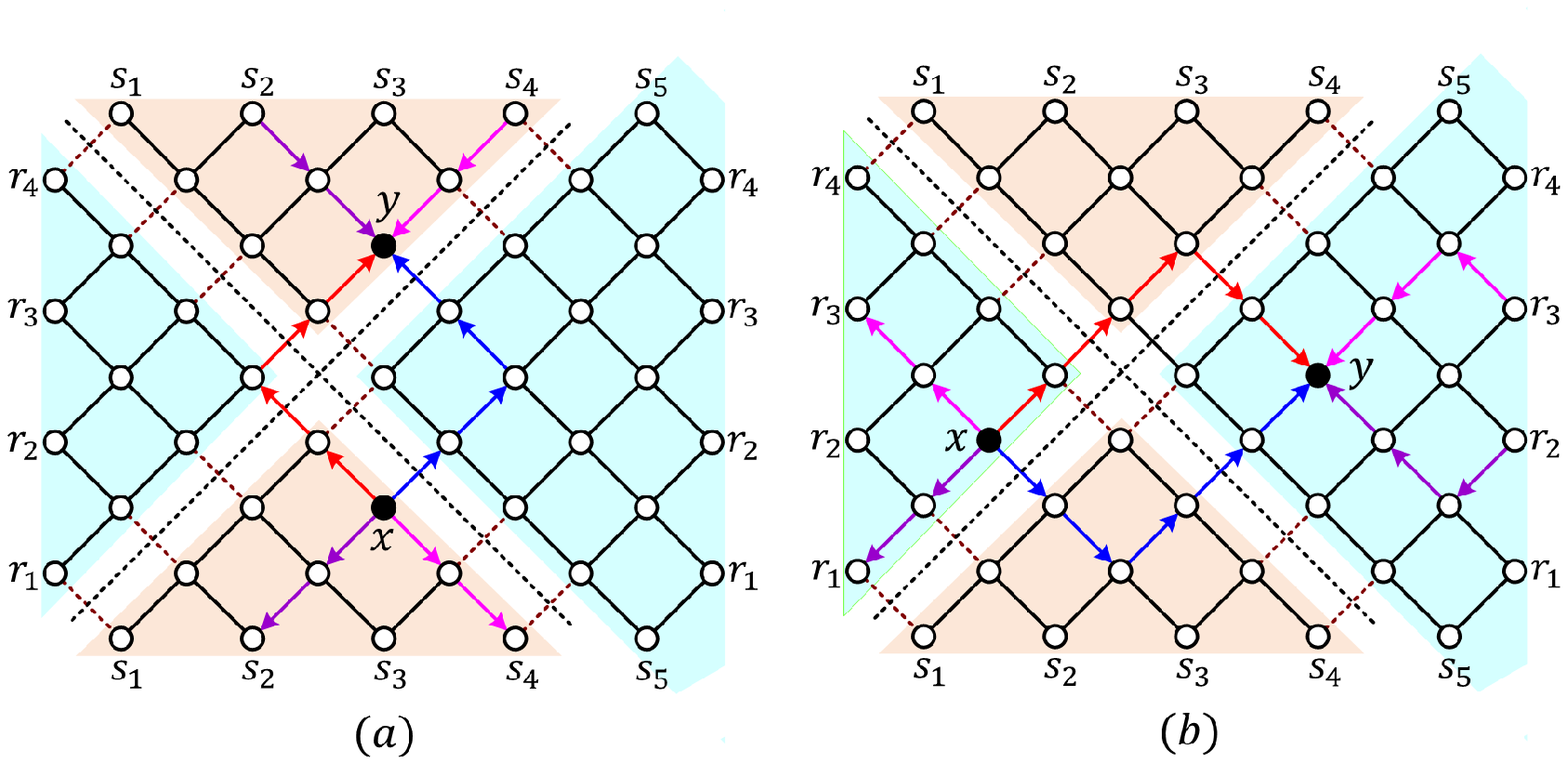}}
	\end{center}
	\caption{.}
	\label{fig:WI_UW_Partition_Case3_4x5}
\end{figure}
\section{Computation of Wiener Index of $\VT(r,s)$}
\label{sec:WI}
{\red  UNDER PREPARATION}

Since Villarceau torus does not admit convex partitions, the computation of the Wiener Index of $\VT(r,s)$ becomes tedious and complicated. Many wonder “Is it worth and required, when an $O(n^3)$ brute-force algorithm \cite{Klavzar08,KlNa14,KlRo14,MaRa08} is available to compute Wiener Index of  $\VT(r,s)$?”. The implementation of this brute-force algorithm gives only the value of the Wiener Index and it does not give the formula for the Wiener Index of $\VT(r,s)$.
The objective here is to derive a formula for the Wiener Index of $\VT(r,s)$ so that this formula will be applied in other combinatorial problems such as graph embedding, forwarding indices and network routing algorithms \cite{MaRa08}. In addition, once a formula is derived, the running time of the computation of the Wiener Index becomes $O(1)$ which is constant time.

Even though $\VT(r,s)$ does not admit any convex edgecuts, it admits edgecuts. Since $\VT(r,s)$ is bipartite, there are several type of edgecuts. One  type of edgecuts is illustrated in Figure \ref{fig:WI_UW_Partition_4x5}.

Given a pair $x$ and $y$ of vertices in $G$, an \textit{acute isometric $x,y$-path} is an isometric path traversing only along acute edges and an \textit{obtuse isometric $x,y$-path} is an isometric path traversing only along obtuse edges. An \textit{acute-obtuse isometric $x,y$-path} is an isometric path that traverses  along acute edges first and then traverses along obtuse edges until reaching $y$. Similarly, an \textit{obtuse-acute isometric $x,y$-path} is an isometric path that traverses along  obtuse edges first and then traverses along acute edges until reaching $y$. 

We partition the set $V\times V$ into three subsets as follows:
$T_1 = \{{x,y} : x,y \in V$ and $\}$
xxxx
\begin{observation}
	\label{obs:WI-U_i-W_i}
Let $x$ and $y$ be a pair of vertices in $\VT(r,s)$. Let $\{E_1,\cdots,E_p\}$ be an arbitrary edgecut partition of $G$ where each $E_i$ is a $(U_i,W_i)$-edgecut of $G$. 
\begin{description}
	\item[Case 1: Suppose vertices $x$ and $y$ lie on  some $\AP(0,2k)$ (resp. $\OP(0,2k)$).]
	In this case, the isometric $x,y$-path $P(x,y)$ is an acute (resp. obtuse) isometric path. If $x,y \in U_i$ (resp. $W_i$), then $|E_i \cap P(x,y)|=0$. If $x \in U_i$ and $y \in W_i$, then $|E_i \cap P(x,y)|=1$. 
	
	\item[Case 2: Suppose $d(x,y) \neq  r$ and $d(x,y) \neq  s$.]
	In this case, there is one acute-obtuse isometric $x,y$-path  $P_1(x,y)$ and one obtuse-acute isometric $x,y$-path $P_2(x,y)$ between $x$ and $y$. These two isometric paths do not have common vertices other than $x$ and $y$. Also, $P_1(x,y)$ and $P_2(x,y)$ form a diamond shape between $x$ and $y$. If $x,y \in U_i$ (resp. $W_i$), then $|E_i \cap P_1(x,y)|= |E_i \cap P_2(x,y)| = 0$ or $|E_i \cap P_1(x,y)|= |E_i \cap P_2(x,y)|= 2$. If $x \in U_i$ and $y \in W_i$, then $|E_i \cap P_1(x,y)|= |E_i \cap P_2(x,y)|= 1$. 
	
	\item[Case 3: Suppose $d(x,y) = r$ \& $d(x,y) \neq s$ (resp. $d(x,y) = s$ \& $d(x,y) \neq r)$.]
	In this case, there is a pair of acute-obtuse  isometric $x,y$-path $P_1(x,y)$ and obtuse-acute  isometric $x,y$-path $P_2(x,y)$ which form a diamond shape between $x$ and $y$. Then, there is one more pair of acute-obtuse  isometric $x,y$-path $P_3(x,y)$ and obtuse-acute isometric $x,y$-path $P_4(x,y)$ which form another diamond shape between $x$ and $y$. All these four isometric paths are mutually vertex-disjoint except at $x$ and $y$. If $x,y \in U_i$ (resp. $x,y \in W_i$), then $|E_i \cap P_1(x,y)|=|E_i\cap P_2(x,y)| = 2$ and $|E_i \cap P_3(x,y)|=|E_i\cap P_4(x,y)| = 0$. 
	If $x \in U_i$ and $y \in W_i$, 
	then $|E_i \cap P_1(x,y)| = |E_i\cap P_2(x,y)| = |E_i \cap P_3(x,y)| = |E_i\cap P_4(x,y)| = 1$. 
	
\end{description}
\end{observation}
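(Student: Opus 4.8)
The plan is to reduce all three cases to two ingredients: an enumeration of the isometric paths joining a fixed pair $x,y$, and a single crossing-bound lemma for edgecuts. For the first ingredient I would start from the elementary observation that every edge of $\VT(r,s)$ changes \emph{both} coordinates by $\pm 1$, with acute edges changing them with equal sign and obtuse edges with opposite sign. Writing the cyclic displacements of $x$ and $y$ in the two coordinates as $\Delta_1$ (mod $2r$) and $\Delta_2$ (mod $2s$), a short count shows the number of acute and obtuse steps on any walk is pinned by $\Delta_1,\Delta_2$, and that minimizing the length forces all acute steps to share a single direction and all obtuse steps to share a single direction, giving $d(x,y)=\max\{|\Delta_1|,|\Delta_2|\}$ consistent with Property \ref{pro:VT_diameter}. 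Consequently every isometric $x,y$-path interleaves a fixed number $a$ of acute steps with a fixed number $b$ of obtuse steps, and the two extreme orderings are exactly the acute--obtuse path $P_1$ and the obtuse--acute path $P_2$. Enumerating by distance then yields the three cases: $a=0$ or $b=0$ gives the unique monotone path of Case~1 (which is precisely the situation of $x,y$ lying on a common $\AP(0,2k)$ or $\OP(0,2k)$); $a,b\ge 1$ with $d(x,y)\notin\{r,s\}$ gives the single diamond of Case~2; and $d(x,y)\in\{r,s\}$ means one coordinate is antipodal, so that coordinate can be traversed two ways around the torus, producing the second diamond of Case~3, with the extra revolution accounted for by Theorem \ref{thm:Helix_p_q_revolutions}.

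The second ingredient is the \textbf{crossing-bound lemma}: for any edgecut $E_i$ with sides $U_i,W_i$, each acute path and each obtuse path meets $U_i$ in a contiguous arc and $W_i$ in a contiguous arc, equivalently a monotone (all-acute or all-obtuse) isometric path crosses $E_i$ at most once. Granting this, Case~1 is immediate by parity: a monotone path with both endpoints on the same side crosses $E_i$ an even number of times, hence $0$; with endpoints on opposite sides it crosses an odd number of times, hence exactly $1$.

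For Cases~2 and 3 I would combine the lemma with the fact that any cycle meets an edgecut in an even number of edges. Each of $P_1,P_2$ consists of one acute segment and one obtuse segment, so by the lemma $|E_i\cap P_1|\le 2$ and $|E_i\cap P_2|\le 2$. Since $P_1$ and $P_2$ meet only at $x,y$, their union is a cycle, so $|E_i\cap P_1|+|E_i\cap P_2|$ is even. If $x\in U_i$ and $y\in W_i$, each path crosses an odd number of times, and being at most $2$ each must equal $1$, which is the stated conclusion. If $x,y$ lie on the same side, each path crosses an even number of times, hence each is $0$ or $2$; here I would invoke the translational symmetry of the diamond (the acute segment of $P_1$ and the acute segment of $P_2$ are parallel translates, as are the two obtuse segments) to force $|E_i\cap P_1|=|E_i\cap P_2|$, giving the $\{0,0\}$ or $\{2,2\}$ alternative. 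Case~3 is then this argument applied to each of the two diamonds, with the antipodal revolution count deciding which diamond absorbs the crossings.

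The main obstacle is the crossing-bound lemma together with the same-side symmetry; the parity and even-cycle bookkeeping above is routine once they are in hand. The lemma is not a formal consequence of an edgecut merely having two connected sides, so it must be extracted from the geometry of $\VT(r,s)$ — I would show that requiring $G[U_i]$ and $G[W_i]$ to be connected already forces each helical acute cycle and obtuse cycle of Theorem \ref{thm:VT_Acute_Edgeset_Part} to be split by $E_i$ into one arc per side, from which the contiguity (and hence the at-most-one and at-most-two bounds) follows. I expect this contiguity, and the translate-symmetry needed in the same-side subcase, to be the genuinely delicate points, and it is here that the restriction to the edgecuts actually arising in $\VT(r,s)$ (rather than truly arbitrary ones) will have to be made precise.
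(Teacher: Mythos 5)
The paper offers no proof of this observation to compare against --- Section~\ref{sec:WI} is explicitly marked as under preparation and the observation is stated without argument --- so your proposal has to be judged entirely on its own merits. On those merits, the first half of your plan is sound: the $(\pm1,\pm1)$ coordinate analysis, the fact that an isometric path uses a fixed number of acute steps all of one sign and a fixed number of obtuse steps all of one sign, and the resulting reduction of Cases 1--3 to parity bookkeeping form the right skeleton. One slip even there: $d(x,y)=r$ does \emph{not} imply that a coordinate is antipodal. For $r$ even, $x=(0,0)$ and $y=(0,r)$ satisfy $d(x,y)=r\neq s$, yet this pair admits only one diamond, so the two-diamond count of Case~3 requires antipodality in the first coordinate, a strictly stronger hypothesis than the stated one.

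The genuine gap is the one you flag but do not close: the crossing-bound lemma. You propose to derive ``every all-acute or all-obtuse isometric path meets $E_i$ at most once'' from the mere connectedness of $G[U_i]$ and $G[W_i]$. That derivation cannot work, because the lemma is false at that level of generality. Take $U_i$ to be a horseshoe-shaped connected region of $\VT(r,s)$; its complement $W_i$ is also connected (on a torus one passes through the gap of the horseshoe), so the edges between them form a legitimate $(U_i,W_i)$-edgecut under the paper's definition, yet an acute path entering and leaving the two arms of the horseshoe crosses $E_i$ two or four times with both endpoints on the same side, contradicting Case~1. The same defect kills your ``translational symmetry'' step in the same-side subcase of Case~2: let $U_i$ consist of the single vertex at one corner of the diamond and $E_i$ the edges incident to it; both sides are connected, but the two paths then have crossing numbers $(2,0)$ rather than $(0,0)$ or $(2,2)$, so parallel translates need not cross a cut equally often. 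Neither the at-most-once bound nor the equality $|E_i\cap P_1|=|E_i\cap P_2|$ is a consequence of connectedness; both are properties of a restricted family of cuts (presumably the ``X type'' cuts of Figure~\ref{fig:WI_UW_Partition_4x5}) that neither you nor the paper ever defines. What your difficulties actually reveal is that the observation as stated --- for an \emph{arbitrary} edgecut partition --- is false, and any correct proof must begin by specifying the admissible cuts and proving the contiguity property for exactly that family; until that is done, the parity arguments, however correct, have nothing to stand on.
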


In Observation \ref{obs:WI-U_i-W_i}, the case “$d(x,y) = r  = s$” does not arise in the rectangular Villarceau torus $\VT(r,s)$ because $s$ is assumed to be larger than $r$.
\section{Congestion-balanced routing}
\label{sec:cong-balanced-routing}
{\red  UNDER PREPARATION}

The problem of finding optimal congestion-balanced routing in a graph G is equivalent to the problem of finding the edge forward index of $G$ \cite{XuXu13}. Thus, the problem of finding a congestion-balanced routing for general graphs is NP-complete \cite{XuXu13}.

A routing $\mathcal{C}$ is said to be \textit{congestion-balanced} if
$\Pi(G,\mathcal{C},e_i)$ = $\Pi(G,\mathcal{C},e_j)$
for every $e_i$ and $e_j$ in $E(G)$.
A routing $\mathcal{C}$ is said to be optimal congestion-balanced if 
$\Pi(G,\mathcal{C},e_i)$ = $\Pi(G,\mathcal{C},e_j)$ = 
$\dfrac{d_{u,v\in V}d_G(u,v)}{|E|}$ for every $e_i$ and $e_j$ in $E(G)$.

Now we shall demonstrate how Villarceau torus $\VT(r,s)$ admits an optimal congestion-balanced routing.

\section*{Acknowledgment}
I’d like to acknowledge Prof Indra Rajasingh, Dr Parthiban Natarajan, Mr Andrew Arokiaraj and Dr Prabhu Saveri for their contributions to this research.


\begin{thebibliography}{99}
\bibitem{Blum80}
R~.Blum, Circles on surfaces in the Euclidean 3-space, Lecture Notes in Mathematics, 792(1980) 213--221.

\bibitem{Baird18} 
Eric Baird, Villarceau circles and variable-geometry toroidal coils. Columbia University, USA, July 2018.	

\bibitem{CaMa05} 
J.~Caceres, A.~Marquez, O.~Oellermann, and M.~Puertas, 
Rebuilding convex sets in graphs, 
Discrete Mathematics, 297(1-3) (2005) 26--37.

\bibitem{ChDe97} 
V.~Chepoi, M.~Deza, and V.~Grishukhin,  Clin d’oeil on L1-embeddable planar graphs, Discrete Applied Mathematics 80 (1997) 3--19.

\bibitem{Cons18} 
O.~Consa, Helical solenoid model of the electron. Progress in Physics, 14(2) (2018) 80--89. 

\bibitem{Coxeter69} 
H.S.M.~Coxeter, Introduction to Geometry, 2nd ed. New York: Wiley, Pages 132--133, 1969.

\bibitem{Dorst19}
L.~Dorst, Conformal Villarceau Rotors, Advances in Applied Clifford Algebras, 29 (2019) 44--64, . https://doi.org/10.1007/s00006-019-0960-5

\bibitem{FeCh16}
WU Fei-long, LI Chuan-liang, SHI Wei-xin, WEI Ji-lin, and DENG Lun-hua, Study on the Spiral-Torus Herriott Type Cell, Spectroscopy and Spectral Analysis, 36 (4)  (2016) 1051--1055. DOI: 10.3964/j.issn.1000-0593(2016)04-1051-05

\bibitem{GrWi85} 
R.L.~Graham, and P.M.~Winkler, On isometric embeddings of graphs. Transactions of the American Mathematical Society, 288 (1985) 527--536.

\bibitem{GuCh02}
William Guss and Chiping Chen, Equilibrium of self-organized electron spiral toroids, Physics of Plasmas, 9 (8) (2002) 3303--3310. https://doi.org/10.1063/1.1487864

\bibitem{HeCl02}
J.~Hefferon and W.E.~Clark, Elementary Number Theory, University of South Florida, 2002.

\bibitem{Hest20}
D.~Hestenes, Zitterbewegung structure in electrons and photons, arXiv:1910.11085: General Physics, 24 Jan 2020.

\bibitem{Huret06}
J.~ Huret, DNA: molecular structure, Atlas of Genetics and Cytogenetics in Oncology and Haematology, 2006.

\bibitem{Klavzar08} 
S.~Klavžar. A birds eye view of the cut method and a survey of its applications in chemical graph theory. MATCH Communications in Mathematical and in Computer Chemistry, 60 (2008) 255--274. 

\bibitem{KlGu95} 
S.~Klavžar, I.~Gutman, and B.~Mohar, Labelling of benzenoid systems which reflects the vertex-distance relation. The Journal for Chemical Information and Computer scientists, 35 (1995) 590--593 

\bibitem{KlNa14} 
S.~Klavžar and M.J.~Nadjafi-Arani,   Wiener index in weighted graphs via unification of $\Theta$*-classes, European Journal of Combinatorics, 36 (2014) 71--76.

\bibitem{KlRo14}
S.~Klavžar and G.D. ~Romih, The Cut Method on Hypergraphs for the Wiener Index,
Journal of Mathematical Chemistry, 289 (2023).
DOI: https://doi.org/10.1007/s10910-023-01478-4.

\bibitem{MaRa08}
P.~Manuel, I.~Rajasingh, B.~Rajan and H.~Mercy, Exact wirelength of hypercube layout on $k$-cube necklace, Journal of Combinatorial Mathematics and Combinatorial Computation, 67 (2008) 67--76. 

\bibitem{Meijer91}
P.T.~Meijer, Connectivities and diameters of circulant graphs, MSc Thesis, Simon Fraser University, Canada, 1991.

\bibitem{MoMo11}
M.G.~Monera and J.~Monterde, Building a Torus with Villarceau Sections, Journal for Geometry and Graphics, 15(1) (2011) 93--99.

\bibitem{OlBo12}
K.~Olsen and J.~Bohr, Geometry of the toroidal N-helix: optimal-packing and zero-twist, New Journal of Physics, 14 (2012) 023063. DOI 10.1088/1367-2630/14/2/023063.

\bibitem{PaRa15}
N.~Parthiban, I.~Rajasingh, R.S.~Rajan,
Improved Bounds on Forwarding Index of Networks,
Procedia Computer Science, 57 (2015, 592--595.

\bibitem{Roegel14}
D.~Roegel, The Villarceau circles in Uhlberger’s staircase (ca. 1580), [Research Report] 2014, HAL-00941465.

\bibitem{Vill48}
M.~Villarceau, "Théorème sur le tore." Nouv. Ann. Math. 7 (1848) 345--347. 

\bibitem{WaCr53}
J.~Watson and F.~Crick, A structure for deoxyribose nucleic acid. Nature, 171 (1953) 737--738.

\bibitem{XuXu13} 
J.~Xu and M.~Xu, The forwarding indices of graphs – a survey, Opuscula Mathematica, 33(2) (2013) 345--372.

\bibitem{YoFi15}
M.~Yolles and G.~Fink. A General Theory of Generic Modeling and Paradigm Shifts. Kybernetes, 44(2) (2015)283--298.

\bibitem{ZhNa20}
C.~Zhang and K.~Nandakumar, Enhancement of Heat Transfer in Laminar Flows Using a Toroidal Helical Pipe, Industrial and Engineering Chemistry Research, Volume 59(9) (2020) 3922--3933, . DOI: 10.1021/acs.iecr.9b04196
\end{thebibliography}
\end{document}